\documentclass[a4paper,10pt]{amsart}
\usepackage[plainpages=false]{hyperref}
\usepackage{amsfonts,amssymb,amsthm, mathrsfs, lscape}
\usepackage{verbatim}

\usepackage{latexsym}
\usepackage{amssymb}
\usepackage{graphics}
\usepackage{graphicx}
\usepackage[space]{cite}

\usepackage{latexsym}
\usepackage{amsfonts}
\usepackage{amsmath}
\usepackage{latexsym}
\usepackage{amsfonts}
\usepackage{amssymb}
\usepackage{rawfonts}

\usepackage[usenames,dvipsnames]{pstricks}
\usepackage{epsfig}
\usepackage{pst-grad} % For gradients
\usepackage{pst-plot} % For axes
\usepackage[space]{grffile} % For spaces in paths
\usepackage{etoolbox} % For spaces in paths
\usepackage{bm}
\makeatletter % For spaces in paths

%\input{prepictex}
%\input{pictex}
%\input{postpictex}

% \usepackage[usenames,dvipsnames]{pstricks}
% \usepackage{epsfig}
% \usepackage{pst-grad} % For gradients
% \usepackage{pst-plot} % For axes
% \usepackage[space]{grffile} % For spaces in paths
% \usepackage{etoolbox} % For spaces in paths
% \makeatletter % For spaces in paths
% \patchcmd\Gread@eps{\@inputcheck#1 }{\@inputcheck"#1"\relax}{}{}
% \makeatother
% % User Packages:
% 
% 

\renewcommand{\Re}{{\operatorname{Re}\,}}
\renewcommand{\Im}{{\operatorname{Im}\,}}

\renewcommand{\epsilon}{\varepsilon}

\newcommand{\kahler}{K\"{a}hler }

\newcommand{\N}{{\mathbb N}}

\newcommand{\R}{{\mathbb R}}
\newcommand{\C}{{\mathbb C}}

\newcommand{\Z}{{\mathbb Z}}

\newcommand{\vol}{{\operatorname{Vol}}}
\newcommand{\iddbar}{\sqrt{-1}\partial \bar{\partial}}
\newcommand{\Pic}{{\operatorname{Pic}}}

\newcommand{\eqd}{\buildrel {\operatorname{def}}\over =}

\newcommand{\hcal}{\mathcal{H}}

\newcommand{\jcal}{\mathcal{J}}

\newcommand{\ocal}{\mathcal{O}}

\newcommand{\ucal}{\mathcal{U}}
\newcommand{\vcal}{\mathcal{V}}

\newtheorem{thm}{Theorem}[section]
\newtheorem{theorem}{{Theorem}}[section]

\newtheorem{lemma}[theorem]{Lemma}
\newtheorem{proposition}[theorem]{Proposition}
\newtheorem{conjecture}[theorem]{Conjecture}

\newenvironment{remark}{\medskip\noindent{\it Remark:\/} }{\medskip}

\newtheorem{question}[thm]{Question}

\newtheorem{defandthm}[theorem]{Definition and Theorem}

\theoremstyle{empty}

\numberwithin{equation}{section}
\newcommand{\val}{{\operatorname{Val}}}
\def \N {\mathbb N}

\def \C {\mathbb C}
\def \Z {\mathbb Z}
\def \R {\mathbb R}

\def \O {\mathcal O}

\def \Hom {\text{Hom}}
\title[Bergman Kernels of abelian varieties]{On the Bergman kernel of polarized abelian varieties}

\author{Jingzhou Sun}
%\thanks{The author is partially supported by NNSF of China no.11701353 and the STU Scientific Research Foundation for Talents no.130/760181.}
\address{Department of Mathematics, Shantou University, Shantou City, Guangdong Province 515063, China}
\email{jzsun@stu.edu.cn}

\begin{document}

	\begin{abstract}
We prove an explicit formula for the Bergman kernel of polarized abelian varieties. As applications, we show that if two positive line bundles represent the same first Chern class and have identical Bergman kernel functions for some tensor power, then the corresponding powers of the bundles are isomorphic. We also obtain localization results for the maxima and minima of the Bergman kernel function and establish exponential off-diagonal decay estimates.
	\end{abstract}
	
	\subjclass[2010]{32A25,30F45}
	\maketitle
	\setcounter{tocdepth}{1}
	 \tableofcontents
\section{Introduction}
Let $(M,\omega)$ be a \kahler manifold of complex dimension $n$, and let $L\to M$ be a holomorphic line bundle equipped with a Hermitian metric $h$ whose Chern curvature equals $-i\omega$. The Bergman space $\hcal_k$ consists of holomorphic sections of $L^k$ that are square integrable:
\[
\int_M |s|_{h}^2\,\frac{\omega^n}{n!}<\infty.
\]
With the $L^2$--inner product induced by $h$ and $\omega$, $\hcal_k$ is a closed subspace of $L^2(M,L^k)$. The Bergman kernel $K_k(x,y)$ is the integral kernel of the orthogonal projection $L^2(M,L^k)\to\hcal_k$, characterized by
\[
s(y)=\int_M \langle s(x),K_k(x,y)\rangle_{h^k}\,\frac{\omega^n(x)}{n!}\qquad(\forall s\in\hcal_k).
\]
The diagonal density (or Bergman kernel function) is the pointwise trace of $K_k$,
\[
\rho_k(x)=|K_k(x,x)|_{h^k}.
\]

For trivial bundles over domains in $\C^n$ one uses the analogous definition for holomorphic $L^2$--functions.

The Bergman kernel has been intensively studied for domains in $\C^n$ and for \kahler manifolds (see, e.g., \cite{Fefferman1974,boutet1975singularite,kerzman1978cauchy,donaldson2001,Sun2011Expected,Donaldson2014Gromov,berman2011fekete,Donaldson15, dsz1,dsz2,dsz3,Shiffman2008NV,sz1,bsz0,bsz1}). Except in very symmetric cases (for instance complex space forms), explicit formulas are rare. A fundamental substitute is the Tian-Catlin-Zelditch expansion (\cite{Tian1990On, Zelditch2000Szego, Lu2000On, Catlin}): when $M$ is compact,
\[
\rho_k(x)\sim\Big(\frac{k}{2\pi}\Big)^n\Big(1+\frac{b_1(x)}{k}+\frac{b_2(x)}{k^2}+\cdots\Big),
\]
where each $b_j(x)$ is a universal polynomial in the curvature of the Riemannian metric corresponding to $\omega$ and its covariant derivatives. There are extensions of this expansion to symplectic manifolds and orbifolds (see \cite{MM,dailiuma}).

In \cite{sun-rem-hyp}, the author made a progress by proving an explicit formula for the Bergman kernel function for positive line bundles on hyperbolic Riemann surfaces. The formula involves summation over all geodesic loops based at the point where the Bergman kernel is evaluated.
 In this paper, inspired by this work, we continue to study the Bergman kernel of abelian varieties.

\

Let $X=\C^n/\Lambda$ be an abelian variety over $\C$. Here, $\Lambda$ is a lattice in $\C^n$. Let $L\to X$ be a positive line bundle over $X$, whose first Chern class is determined by a positive definite Hermitian form $H=(H_{ij})$ on $\C^n$ such that $\Im H(\Lambda,\Lambda)\subset \Z$. For each vector $v\in \C^n$, we denote by $|v|_H$ the norm of $v$ defined by $H$.
Let $\omega=\pi \sqrt{-1} \sum H_{ij} dz^i\wedge d\bar{z}^j \in 2\pi c_1(L)$ be a flat K\"ahler metric on $X$. There exist a Hermitian metric $h$ on $L$ with curvature $- i \omega$.

For each $p\in X$, $\exists \tilde{p}\in\C^n$ such that $p=\pi(\tilde{p})$. Such a $\tilde{p}$ is called a lift of $p$.
For each $v\in \Lambda$, let $\gamma_{p,v}\subset X$ be the smooth geodesic loop defined by $v$ starting at $p$ with orientation defined by $\frac{\partial}{\partial a}$, where $a$ is the parameter of the line segment $\tilde{p}+av$, $a\in[0,1]$. It is easy to see that $\gamma_{p,v}$ does not depend on the choice of $\tilde{p}$, and that the length of $\gamma_{p,v}$ is $|v|_H$. Let $\nabla$ be the Chern connection of $(L,h)$.
 Let $e^{2\pi i\alpha_v(p)}=\text{Hol}_{L}(\gamma_{p,v})$ be the holonomy of $\nabla$ along $\gamma_{p,v}$. 
Our first result is the following formula:
\begin{theorem}
\label{thm-main}
	 For $k\geq 1$, the Bergman kernel $\rho_k$ of $H^0(X,L^k)$ at each $p\in X$ satisfies 
	 \[\rho_k(p)=(\frac{k}{2\pi})^n\left(1+\sum_{v\in \Lambda\setminus \{0\}}e^{-\frac{k}{4}(|v|_{H})^2}\cos\big(2\pi k\alpha_v(p)\big)\right). \]

\end{theorem}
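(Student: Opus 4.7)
My strategy is to lift the problem to the universal cover $\C^n$, identify sections of $L^k$ with theta functions, and compute the Bergman kernel via a Poincar\'e series built from the explicit Bargmann--Fock reproducing kernel.

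By the Appell--Humbert theorem, sections of $L^k$ lift to holomorphic $f:\C^n\to\C$ satisfying $f(z+v)=a_v(z)^k f(z)$ for $v\in\Lambda$, where
\[
a_v(z)=\chi(v)\exp\!\bigl(\pi H(z,v)+\tfrac{\pi}{2}H(v,v)\bigr)
\]
and $\chi$ is a semi-character. In this trivialization, $|s(p)|_{h^k}^2 = |f(\tilde p)|^2 e^{-\pi k H(\tilde p,\tilde p)}$, and the Bergman $L^2$ inner product on $H^0(X,L^k)$ reduces to integration against $e^{-\pi k H(z,z)}\omega^n/n!$ over a fundamental domain $F\subset\C^n$ for $\Lambda$.

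The ambient Bargmann--Fock space on $\C^n$ with the same Gaussian weight has reproducing kernel $(k/2\pi)^n\,e^{\pi k H(z,w)}$, the prefactor arising from the usual Gaussian integral computation together with the normalization $\omega^n/n!=(2\pi)^n\det H\cdot dV_{\mathrm{Eucl}}$. A standard Poincar\'e-series construction, averaging the Fock kernel over $\Lambda$ with weights supplied by the Appell--Humbert cocycle, produces a $\Lambda$-equivariant function on $\C^n\times\C^n$ with the reproducing property for theta functions; the latter follows by using the equivariance to unfold the fundamental-domain integral to all of $\C^n$ and invoking the Fock reproducing property. Evaluating this series on the diagonal at $\tilde p$ and multiplying by $e^{-\pi k H(\tilde p,\tilde p)}$, the Gaussian factor combines with the real parts of the Appell--Humbert exponent so that the $v$-th summand reduces to $(k/2\pi)^n\,e^{-\tfrac{\pi k}{2}H(v,v)}$ times a unit complex phase of the form $\overline{\chi(v)^k}\,e^{-2\pi i k\Im H(\tilde p,v)}$. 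Under the convention $|v|_H^2=2\pi H(v,v)$ induced by the length formula in $\omega$, the Gaussian decay equals $e^{-(k/4)|v|_H^2}$, and the $v=0$ summand contributes the constant $(k/2\pi)^n$.

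The remaining task is to identify the unit phase with $e^{-2\pi i k\alpha_v(p)}$, the inverse $k$-th power of the holonomy $\mathrm{Hol}_L(\gamma_{p,v})=e^{2\pi i\alpha_v(p)}$. This follows from a direct computation of parallel transport of the Chern connection $\nabla$ along the lifted segment $\tilde p+tv$, $t\in[0,1]$, using the explicit connection one-form in the Appell--Humbert gauge together with the semi-character $\chi$. Since $\alpha_{-v}(p)=-\alpha_v(p)$ and $|{-v}|_H=|v|_H$, pairing the contributions of $\pm v$ (or equivalently taking real parts, since the sine contributions cancel under $v\mapsto -v$) replaces the complex exponential by $\cos\!\bigl(2\pi k\alpha_v(p)\bigr)$ in the sum, yielding the theorem.

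The main obstacle is twofold: first, setting up the Poincar\'e series with the correct $\Lambda$-equivariance in both arguments --- this requires careful bookkeeping with the Appell--Humbert cocycle identity $a_{u+v}(z)=a_u(z)\,a_v(z+u)$; second, matching the imaginary part of the Appell--Humbert exponent, combined with the semi-character $\chi$, to the geometric holonomy $e^{2\pi i\alpha_v(p)}$. The second step is the more delicate one, since it depends on the precise choice of trivialization used to define both $a_v$ and $\alpha_v(p)$.
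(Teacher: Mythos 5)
Your proposal is correct in outline, and its engine is the same as the paper's: the Bergman kernel of the quotient is obtained by averaging the Bargmann--Fock kernel of $\C^n$ over $\Lambda$, with the reproducing property established by unfolding the fundamental-domain integral to all of $\C^n$ (this is exactly the content of the paper's construction of $K_k^\Lambda=\sum_{v\in\Lambda}K_k^v$ and of Theorem \ref{thm-k-gamma-bergman}, phrased there in terms of translated peak sections $A_v^*s_w$ in an abstract trivialization $\bm{e}_k$ rather than in the Appell--Humbert gauge). Where you genuinely diverge is in how the phase of the $v$-th term is identified with the holonomy. The paper never computes that phase directly: it groups the lattice sum into cyclic subgroups generated by primitive elements, identifies each partial sum with the diagonal Bergman kernel of a flat cylinder $\C^{n-1}\times\C^*$ (Theorem \ref{thm-cylinder-main}), computes that kernel spectrally from the orthogonal basis, passes to the geometric side by Poisson summation, and reads off the holonomy from a winding-number/area lemma. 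You stay entirely on the geometric side: each term of the Poincar\'e series is evaluated on the diagonal, the magnitude $e^{-\frac{\pi k}{2}H(v,v)}=e^{-\frac{k}{4}|v|_H^2}$ drops out at once (your normalization checks against the paper's, where $\omega=\pi\sqrt{-1}\sum H_{ij}dz^i\wedge d\bar z^j$ gives $|v|_H^2=2\pi H(v,v)$), and the residual unit phase is matched to the holonomy by integrating the connection one-form along the lifted segment $\tilde p+tv$. Your route is shorter and avoids Poisson summation altogether; the paper's route buys a standalone cylinder theorem and a transparent geometric source for the phase. The one caution is that the two steps you defer --- the cocycle bookkeeping for the equivariant reproducing kernel, and the parallel-transport computation matching $\overline{\chi(v)^k}$ times the $\Im H$-phase to $e^{-2\pi i k\alpha_v(p)}$ --- constitute most of the actual work and must be written out explicitly (the second is sensitive to which argument of $H$ the cocycle twists); the final passage from complex exponentials to cosines via $\alpha_{-v}(p)=-\alpha_v(p)$ and the evenness of the Gaussian weight is handled correctly.
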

\begin{remark}
\begin{itemize}
\item This formula is closely parallel to Theorem~1 of \cite{sun-rem-hyp}, and the proof follows the same strategy. Equivalently, the sum may be written over all geodesic loops based at \(p\), as in \cite{sun-rem-hyp}. It is instructive to compare the two settings: in the hyperbolic case the contribution of a loop of length \(l(\gamma)\) is essentially \(\cosh^{-2k}\!\big(\tfrac{l(\gamma)}{2}\big)\), while in the flat (abelian) case it is the Gaussian factor \(e^{-\frac{k}{2}(|v|_{H})^2}\). This contrast reflects the difference between negative curvature and zero curvature geometries.
\item The formula is also valid when the rank of $\Lambda$ is not maximal, i.e., for polarized complex tori that are not necessarily compact. For simplicity, we only state the compact case here, as we think people care most about the compact case.
\end{itemize}
\end{remark}

For a general polarized manifold \((X,L)\) one may ask:
\begin{question}
Given another Hermitian polarization \((L',h')\) with \(\Theta_{h'}=\Theta_h\) (so \(c_1(L')=c_1(L)\)), under what conditions can one detect that the Bergman kernels \(\rho_{L',k}\) and \(\rho_{L,k}\) are different?
\end{question}
Since the Tian--Catlin--Zelditch expansion depends only on local curvature data, it cannot distinguish such global differences. As an application of Theorem \ref{thm-main} we obtain the following rigidity statement.

\begin{theorem}\label{thm-second}
	Let $(X,L)$ be a polarized abelian variety. Let $L'$ be another polarization of $X$ with respect to a Hermitian metric $h'$ on $L'$ such that $\Theta_{h'}=\Theta_h$. Suppose that $$\rho_{L',k}=\rho_{L,k}$$ for some $k\geq 1$. Then $kL'$ is isomorphic to $kL$.
\end{theorem}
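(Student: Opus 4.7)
The plan is to apply Theorem \ref{thm-main} to express $\rho_{L,k}-\rho_{L',k}$ as a Fourier-like series on the real torus $X$, and then read off the isomorphism from orthogonality of characters. First, because $\Theta_{h'}=\Theta_h$, the bundle $L'\otimes L^{-1}$ with its induced Hermitian connection is flat, and flat Hermitian line bundles on $X$ are classified by unitary characters $\chi\in\Hom(\pi_1(X),U(1))=\Hom(\Lambda,U(1))$. Writing $\chi(v)=e^{2\pi i\beta_v}$, the holonomies of $L'$ and $L$ along $\gamma_{p,v}$ differ multiplicatively by $\chi(v)$, so
\[
\alpha_{L',v}(p)\equiv \alpha_{L,v}(p)+\beta_v \pmod{\Z},
\]
and crucially $\beta_v$ is \emph{independent of} $p$. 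Since $kL\cong kL'$ is equivalent to $\chi^k$ being trivial as a character of $\Lambda$, the goal reduces to proving $k\beta_v\in\Z$ for every $v\in\Lambda$.

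Applying Theorem \ref{thm-main} to $L$ and to $L'$, the prefactor $(k/2\pi)^n$, the constant $1$, and the Gaussian weights $e^{-k|v|_H^2/4}$ coincide because the norms $|v|_H$ depend only on the curvature form, which is common to $h$ and $h'$. Hence $\rho_{L,k}\equiv\rho_{L',k}$ simplifies to
\[
\sum_{v\in\Lambda\setminus\{0\}} e^{-k|v|_H^2/4}\Bigl[\cos\bigl(2\pi k\alpha_{L,v}(p)\bigr)-\cos\bigl(2\pi k(\alpha_{L,v}(p)+\beta_v)\bigr)\Bigr]=0,\qquad p\in X.
\]
Expanding each cosine in exponentials and pairing $v$ with $-v$ (using $\alpha_{L,-v}=-\alpha_{L,v}$, $\beta_{-v}=-\beta_v$, and $|v|_H=|{-v}|_H$), the identity collapses to
\[
\sum_{v\in\Lambda\setminus\{0\}} e^{-k|v|_H^2/4}\bigl(1-e^{2\pi i k\beta_v}\bigr)\,e^{2\pi i k\alpha_{L,v}(p)}=0.
\]

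A direct computation of the parallel transport along the affine loop $\tilde p+tv$ with respect to the Kähler potential $\pi H(z,z)$ shows that $e^{2\pi i k\alpha_{L,v}(p)}$ equals a constant phase (encoding the semicharacter of $L$, depending on $v$ but not on $p$) times the unitary character $\xi_v(p):=e^{2\pi ik\,\Im H(v,\tilde p)}$ of $X$. Because $\Im H$ is nondegenerate and takes integer values on $\Lambda\times\Lambda$, the map $v\mapsto k\,\Im H(v,\cdot)$ injects $\Lambda$ into the character lattice of $X$, and $\xi_v$ is nontrivial for every $v\neq 0$. Orthogonality of characters in $L^2(X)$ therefore forces each coefficient of the displayed series to vanish, and since the weight $e^{-k|v|_H^2/4}$ and the constant phase are nonzero, we conclude $e^{2\pi i k\beta_v}=1$, i.e.\ $k\beta_v\in\Z$, for every $v\in\Lambda$. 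Hence $\chi^k=1$, $(L'\otimes L^{-1})^{\otimes k}$ is holomorphically trivial, and $kL'\cong kL$. The only conceptually delicate step is the $p$-independence of $\beta_v$, which rests on flatness of $L'\otimes L^{-1}$; once that is established, Fourier analysis on the torus $X$ finishes the argument.
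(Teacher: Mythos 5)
Your proof is correct, and it takes a noticeably different (and in some ways cleaner) route than the paper's. Both arguments ultimately rest on Theorem \ref{thm-main} plus the observation (the paper's equation \eqref{eq-hol}) that $p\mapsto e^{2\pi i\alpha_v(p)}$ is a fixed phase times the character $e^{2\pi i\,\Im H(v,\tilde p)}$ of $X$, but the implementations diverge. The paper first proves a holonomy-comparison lemma (Lemma \ref{lem-holonomy}: agreement of holonomies along a basis of $\Lambda$ forces isomorphism), and then, for each basis vector $v_1$, integrates $\rho_{L,k}-\rho_{L',k}$ over the fibers of a projection $P:X\to Q/\langle u\rangle$ onto a circle transverse to the subtorus $\{\Im H(v_1,\cdot)=0\}$; this fiberwise averaging kills the contributions of all $u$ not proportional to $v_1$ and isolates a one-variable Fourier series in the $v_1$-direction. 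You instead package the discrepancy between $L'$ and $L$ into the single flat character $\chi$ of $L'\otimes L^{-1}$ (which makes the $p$-independence of the phase shift $\beta_v$ transparent and explains why only $\chi^k$, hence only $kL'\cong kL$, can be recovered), and then run the orthogonality argument globally on $X$ against all characters $\xi_v$ at once, using nondegeneracy of $\Im H$ to see that $v\mapsto\xi_v$ is injective with $\xi_v$ nontrivial for $v\neq0$. Your version buys a one-shot treatment of all lattice directions and dispenses with the paper's quotient-torus construction and with Lemma \ref{lem-cos-vu}; the paper's version buys a statement (Lemma \ref{lem-holonomy}) reusable elsewhere. The only points worth making explicit in a final write-up are the term-by-term integration (justified by the uniform absolute convergence coming from the Gaussian weights) and the pairing $\alpha_{-v}=-\alpha_v$, $\beta_{-v}=-\beta_v$ used to collapse the cosines; both are as you state.
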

Similar to \cite{sun-rem-hyp}, another application of Theorem \ref{thm-main} concerns the locations of the global maxima and minima of $\rho_k$. Set
\[
l_1=\min_{v\in \Lambda\setminus\{0\}} |v|_{H}, \qquad
l_2=\min_{\substack{v\in \Lambda\\ |v|_{H}>l_1}} |v|_{H},
\]
and write $S_1=\{v\in\Lambda:\;|v|_H=l_1\}$.

\begin{theorem}\label{thm-max-min}
\begin{enumerate}
\item[(a)] Assume $\Im H(u,v)\in 2\mathbb{Z}$ for all $u,v\in\Lambda$. Then for every $k\ge1$ the Bergman density $\rho_k$ attains its maximum precisely at those points $p\in X$ for which
\[
\mathrm{Hol}_L(\gamma_{p,v})=1\quad\text{for all }v\in\Lambda.
\]

\item[(b)] Assume $\Im H(u,v)\in 2\mathbb{Z}$ for all $u,v\in S_1$. There exist constants $C>0$ and $k_0\in\mathbb{N}$ such that for every $k\ge k_0$ every global maximum of $\rho_k$ lies within a neighborhood of radius
\[
C\exp\!\Big(\tfrac{k}{4}(l_1^2-l_2^2)\Big)
\]
of some point $p$ satisfying
\[
\mathrm{Hol}_{kL}(\gamma_{p,v})=1\quad\text{for all }v\in S_1.
\]

\item[(c)] Let $\{v_1,\dots,v_m\}$ be a maximally linearly independent subset of $S_1$ and suppose $\#S_1=2m$ (so $S_1=\{\pm v_1,\dots,\pm v_m\}$). Then the statement of (b) holds. Moreover, there exists $C'>0$ and $k_0'$ such that for all $k\ge k_0'$ every global minimum of $\rho_k$ lies within a neighborhood of radius
\[
C'\exp\!\Big(\tfrac{k}{4}(l_1^2-l_2^2)\Big)
\]
of some point $p$ with
\[
\mathrm{Hol}_{kL}(\gamma_{p,v})=-1\quad\text{for all }v\in S_1.
\]
\end{enumerate}
\end{theorem}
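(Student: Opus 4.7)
All three parts hinge on the explicit formula of Theorem~\ref{thm-main},
\[
\rho_k(p) = \Big(\tfrac{k}{2\pi}\Big)^n\Bigl(1 + \sum_{v \in \Lambda \setminus \{0\}} e^{-\frac{k}{4}|v|_H^2}\cos\bigl(2\pi k\alpha_v(p)\bigr)\Bigr),
\]
so the extremum problem for $\rho_k$ reduces to one for the oscillating lattice sum. Each summand is bounded by $e^{-k|v|_H^2/4}$ in absolute value, with equality only when $k\alpha_v(p)\in\Z$ (giving $+1$) or $k\alpha_v(p)\in\tfrac12+\Z$ (giving $-1$). A standard lattice-counting estimate yields $\sum_{|v|_H\ge l_2}e^{-k|v|_H^2/4}=O\!\bigl(e^{-kl_2^2/4}\bigr)$, so the dominant oscillation is driven by the $S_1$-terms.

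\textbf{Part (a).} First I would derive, via parallel transport in the Appell--Humbert automorphy-factor trivialization, the explicit expression $e^{2\pi i\alpha_v(p)} = \chi(v)^{-1} e^{-2\pi i\Im H(\tilde p, v)}$. Under the hypothesis $\Im H(\Lambda,\Lambda)\subset 2\Z$, the cocycle identity for the multiplier forces the semicharacter $\chi$ to be a genuine character, so $p\mapsto(v\mapsto e^{2\pi i\alpha_v(p)})$ defines a character of $\Lambda$ for each $p$. The maximum of $\rho_k$ is attained precisely when every cosine equals $+1$, and I would identify this locus with the set in the statement by combining surjectivity of the polarization map $\tilde p \mapsto e^{-2\pi i\Im H(\tilde p,\cdot)}$ onto $\hat\Lambda$ with the character structure on $\Lambda$.

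\textbf{Part (b).} Decompose the sum as $\Sigma_{S_1}(p)+R_k(p)$; the tail obeys $R_k(p)=O(e^{-kl_2^2/4})$ uniformly in $p$. Under the weaker hypothesis, the $S_1$-cosines can still be simultaneously forced to $+1$ at some $p_\ast$ with $\mathrm{Hol}_{kL}(\gamma_{p_\ast,v})=1$ for all $v\in S_1$. Since $\alpha_v$ is affine in $\tilde p$ with gradient $-\Im H(\cdot,v)$, Taylor-expanding around $p_\ast$ gives, for $d(p,p_\ast)$ small,
\[
\Sigma_{S_1}(p_\ast)-\Sigma_{S_1}(p) \;\ge\; c\,k^2 e^{-kl_1^2/4}\,d(p,p_\ast)^2,
\]
the positivity of $c$ coming from the quadratic form $u\mapsto\sum_{v\in S_1}(\Im H(u,v))^2$. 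Comparing with $|R_k(p_{\max})-R_k(p_\ast)|\le 2\|R_k\|_\infty$ and rearranging yields the claimed radius.

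\textbf{Part (c) and the main obstacle.} With $S_1=\{\pm v_1,\ldots,\pm v_m\}$ and $\{v_j\}$ linearly independent, the argument of (b) transfers to maxima immediately. For minima, one instead seeks $p$ with $\mathrm{Hol}_{kL}(\gamma_{p,v_j})=-1$ for each $j$; the linear independence is exactly what makes the corresponding affine system on $\tilde p$ simultaneously solvable modulo $\Lambda$, so that every $v\in S_1$ contributes $-e^{-kl_1^2/4}$ and $\Sigma_{S_1}$ attains its minimum. The same quadratic lower-bound argument around these minimum points yields the localization. The main obstacle, in my view, is the quantitative balancing in (b)--(c): one must establish uniform positivity of the quadratic coefficient $c$ (which rests on $S_1$ spanning $\C^n$ over $\R$) and deploy a sufficiently sharp lattice-tail estimate for $R_k$ to recover the precise radius $C\exp\!\bigl(k(l_1^2-l_2^2)/4\bigr)$ rather than a weaker substitute.
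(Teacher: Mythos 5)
Your strategy is essentially the paper's: split the sum in Theorem \ref{thm-main} into the $S_1$-terms and a tail of size $O(e^{-kl_2^2/4})$, compare a candidate maximum $q$ against a reference point $p_*$ whose $S_1$-holonomies for $L^k$ are all trivial (existence via Proposition \ref{prop-surj}, with the $2\Z$-hypothesis making $v\mapsto\mathrm{Hol}(\gamma_{p,v})$ multiplicative on $S_1$ so that trivializing the holonomy along $v_1,\dots,v_m$ trivializes it on all of $S_1$), deduce $\sum_{v\in S_1}\bigl(1-\cos(2\pi k\alpha_v(q))\bigr)\le 2C_1e^{-k(l_2^2-l_1^2)/4}$, and convert this into a distance bound using the nondegeneracy of $E=\Im H$. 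Part (a) and the minimum half of (c) are handled the same way in both; your Appell--Humbert computation in (a) is heavier machinery than the paper's one-line appeal to the formula, but it establishes the same multiplicativity.

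The obstacle you flag at the end is real, and it is the one substantive issue. Since $1-\cos\theta\asymp\theta^2$ and $k\alpha_{v_j}$ is affine in the lift with gradient of size $\asymp k$, the zeroth-order comparison only gives $\mathrm{dist}(k\alpha_{v_j}(q),\Z)=O(\sqrt{\epsilon})$ and hence $d(q,p')=O(\sqrt{\epsilon}/k)$ with $\epsilon=2C_1e^{-k(l_2^2-l_1^2)/4}$; that is exactly the radius $k^{-1}e^{k(l_1^2-l_2^2)/8}$ your quadratic Taylor expansion produces, not the stated $Ce^{k(l_1^2-l_2^2)/4}$. (The paper's proof asserts $d(p',q')<C_2\epsilon$ at the corresponding point, a linear dependence on $\epsilon$ that does not follow from the cosine bound alone, so you have not missed a trick that is actually written there.) The missing idea that recovers the stated exponent is to use first-order information as well: a global maximum is a critical point, so $\nabla\rho_k(q)=0$ forces $\bigl|\sum_j\sin(2\pi k\alpha_{v_j}(q))\,\nabla\alpha_{v_j}\bigr|=O\bigl(e^{-k(l_2^2-l_1^2)/4}\bigr)$ after dividing out the common factor $ke^{-kl_1^2/4}$, and since the constant vectors $\nabla\alpha_{v_j}\propto E(\cdot,v_j)$ are linearly independent this yields $\mathrm{dist}(k\alpha_{v_j}(q),\Z)=O(\epsilon)$ directly, hence $d(q,p')=O(\epsilon/k)$. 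Without that step (or something equivalent), neither your argument nor the written proof reaches the exponent $\tfrac{k}{4}(l_1^2-l_2^2)$; with it, your quadratic-form framework goes through verbatim.
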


\begin{remark}
\begin{itemize}
\item If $(v_1,\dots,v_{2n})$ is any basis of $\Lambda$, Proposition \ref{prop-surj} implies there exist points $p$ with $\mathrm{Hol}_L(\gamma_{p,v_j})=1$ for all $j$. Hence the hypothesis $\Im H(u,v)\in2\mathbb{Z}$ on $\Lambda\times\Lambda$ ensures existence of points where $\mathrm{Hol}_L(\gamma_{p,v})=1$ for every $v\in\Lambda$.
\item In part (b) it is enough to verify the holonomy condition $\mathrm{Hol}_{kL}(\gamma_{p,v_j})=1$ for a maximally independent subset $\{v_j\}_{j=1}^m\subset S_1$.
\end{itemize}
\end{remark}
Similar to Conjecture 1.6 in \cite{sun-rem-hyp}, we propose the following rigidity conjecture for polarized abelian varieties:

\begin{conjecture}
	Let $(X,L)$ and $(X',L')$ be polarized abelian varieties of dimension $n$. Suppose there is a diffeomorphism $\Phi:X\to X'$ and an integer $k\geq 1$ such that
	\[
	\rho_{L,k}=\Phi^*\rho_{L',k}.
	\]
	Then $\Phi$ is either a biholomorphism or an anti-biholomorphism, and moreover
	\[
	\Phi^*((L')^{\!k})\cong L^{\!k}\qquad\text{or}\qquad \Phi^*((L')^{\!k})\cong\overline{L^{\!k}},
	\]
	respectively. Here $\overline{L}$ denotes the complex-conjugate line bundle of $L$ (transition functions given by complex conjugates).
\end{conjecture}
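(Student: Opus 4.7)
The plan is to lift everything to universal covers, use the rigid Fourier-analytic structure of Theorem \ref{thm-main} to show the lift of $\Phi$ is affine with linear part either $\C$-linear or $\C$-antilinear, and then invoke Theorem \ref{thm-second} (or its evident anti-holomorphic analogue).

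Lift $\Phi$ to $\widetilde\Phi:\C^n\to\C^n$. Theorem \ref{thm-main} lifts to the formula $\tilde\rho_{L,k}(z)=(k/2\pi)^n\sum_{v\in\Lambda}e^{-k|v|_H^2/4}\cos(2\pi k\alpha_v(z))$, a Gaussian-weighted sum of plane waves on $\C^n$ at frequencies $k\,\Im H(v,\cdot)$ for $v\in\Lambda$, and similarly on $\C^n$ for $X'$. The hypothesis yields $\tilde\rho_{L,k}(z)=\tilde\rho_{L',k}(\widetilde\Phi(z))$. A discrete-valued continuity argument applied to $\widetilde\Phi(z+\lambda)-\widetilde\Phi(z)\in\Lambda'$ produces a group homomorphism $\phi:\Lambda\to\Lambda'$ such that $g(z):=\widetilde\Phi(z)-\phi_\R(z)$ is $\Lambda$-periodic, where $\phi_\R$ is the unique $\R$-linear extension of $\phi$. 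Since the left-hand side is a pure sum of plane waves, matching forces the right-hand side to be such a sum too; the Gaussian decay of coefficients together with linear independence of plane waves at distinct frequencies then forces each term $\cos(2\pi k\Im H'(v',\widetilde\Phi(z))+\cdots)$ to be a single plane wave on $\C^n$, which requires $\Im H'(v',\widetilde\Phi(z))$ to be $\R$-affine in $z$ for every $v'\in\Lambda'$, hence $\widetilde\Phi(z)=Az+b$ is affine.

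Matching Gaussian amplitudes in the identity gives $|Av|_{H'}=|v|_H$ on $\Lambda$ and, by $\R$-linear extension, $A^*\Re H'=\Re H$. Matching phases across all $v\in\Lambda$, using the quasi-additivity of the semicharacter data encoded in $\alpha_v$, forces $A^*\Im H'=\pm\Im H$ for a single global sign. Together these two conditions are equivalent to $A$ being $\C$-linear ($+$, so $\Phi$ biholomorphic) or $\C$-antilinear ($-$, so $\Phi$ anti-biholomorphic). In the holomorphic case, $\Phi^*L'$ and $L$ on $X$ have identical Chern curvature and identical Bergman kernel functions (the latter from $\Phi^*\omega'=\omega$ making $\rho_{\Phi^*L',k}=\rho_{L',k}\circ\Phi$), so Theorem \ref{thm-second} yields $\Phi^*(kL')\cong kL$. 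In the anti-holomorphic case, factor $\Phi=\sigma\circ\Psi$ with $\sigma:X\to\overline X$ the tautological anti-biholomorphism and $\Psi:\overline X\to X'$ biholomorphic, apply the previous case to $\Psi$, and use $\sigma^*(kL)\cong\overline{kL}$ to conclude $\Phi^*((L')^k)\cong\overline{L^k}$.

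The main obstacle is the affineness step. Since the conjecture is stated for some fixed $k$, no limiting or density argument in $k$ is available, and the rigidity must be squeezed from the single functional identity. Ruling out non-affine $\widetilde\Phi$ ultimately rests on showing that no exotic cancellations among the Gaussian-weighted cosine summands on the right can conspire to produce a pure plane-wave expansion unless each summand is already a plane wave. Real-analyticity of both sides together with the Gaussian decay of Fourier coefficients should yield this, but making the argument quantitative at small $k$—where only a few low-frequency modes dominate—is the principal technical hurdle.
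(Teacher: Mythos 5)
This statement is not proved in the paper: it is stated as an open conjecture, with Theorem \ref{thm-second} offered only as partial evidence. So there is no ``paper proof'' to compare against, and your proposal must stand on its own as a complete argument. It does not.

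The decisive gap is the affineness step, and you have in effect conceded it yourself. The identity $\tilde\rho_{L,k}=\tilde\rho_{L',k}\circ\widetilde\Phi$ equates two $\Lambda$-periodic real-analytic functions on $\C^n$; the left side has Fourier support on the frequencies $kE(v,\cdot)$, $v\in\Lambda$, with Gaussian coefficients. But the right side, being $\Lambda$-periodic automatically, \emph{also} has a Fourier expansion on the dual lattice of $\Lambda$ no matter what $\widetilde\Phi$ is. The pullback of a single term $\cos(2\pi k\alpha'_{v'})$ under a non-affine $\widetilde\Phi$ is a complicated periodic function spread over infinitely many Fourier modes, and the assertion that ``no exotic cancellations among the Gaussian-weighted cosine summands can conspire to produce a pure plane-wave expansion unless each summand is already a plane wave'' is exactly the rigidity one would need to prove; invoking ``linear independence of plane waves at distinct frequencies'' does not apply to a sum of non-plane-wave summands, and ``real-analyticity plus Gaussian decay should yield this'' is not an argument. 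Since the hypothesis is for a single fixed $k$ (possibly $k=1$), you cannot even fall back on a large-$k$ localization of the dominant modes. In addition, even granting affineness, your phase-matching step quietly assumes a single global sign in $A^*\Im H'=\pm\Im H$, whereas matching $\cos$-terms frequency by frequency a priori only gives a sign $\epsilon_v$ for each $v$; promoting this to a global sign requires using additivity of $E$ and of the lattice correspondence, which you do not carry out. The final reduction to Theorem \ref{thm-second} in the holomorphic case, and the factorization through the conjugate variety in the antiholomorphic case, are reasonable, but they sit on top of an unproved core.
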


Theorem \ref{thm-second} gives partial evidence for this conjecture.

Lastly, we state an off-diagonal decay estimate for the Bergman kernel. For $x,y\in X$ let $\mathfrak{G}_{x,y}$ denote the set of geodesic segments joining $x$ to $y$ (parameterized by arc length).
\begin{theorem}\label{thm-off-diag}
	Let $(X,\omega,L,h)$ be a polarized abelian variety as in Theorem \ref{thm-main}. For every $k\ge 1$ and every $x\neq y$,
	\[
	|K_k(x,y)|_{h^k}\;\le\;\Big(\frac{k}{2\pi}\Big)^{\!n}\sum_{\gamma\in\mathfrak{G}_{x,y}} e^{-\frac{k}{4}\,\ell(\gamma)^2},
	\]
	where $\ell(\gamma)$ is the length of $\gamma$.
\end{theorem}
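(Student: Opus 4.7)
The proof of Theorem~\ref{thm-main} writes the Bergman kernel of $(X,L^k)$ as a lattice sum of translates of the Bargmann--Fock kernel on the universal cover $\C^n$. The plan is to carry the same representation off the diagonal and then close by the triangle inequality.

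First I would recall the universal-cover computation: on $\C^n$ with the flat metric $\omega$ and Hermitian weight lifted from $h$, the Bergman kernel of the Fock-type space for $L^k$ is explicit, and its pointwise norm is the Gaussian
\[
\bigl|K^{\C^n}_k(\tilde x,\tilde y)\bigr|_{h^k}=\Bigl(\frac{k}{2\pi}\Bigr)^{\!n}\exp\!\Bigl(-\tfrac{k}{4}|\tilde x-\tilde y|_H^{2}\Bigr),
\]
which is precisely the ingredient yielding each summand of Theorem~\ref{thm-main} on the diagonal. Next I would express $K_k$ on $X$ as a periodization. Fixing lifts $\tilde x,\tilde y\in\C^n$ of $x,y$, and letting $\{\chi_v\}_{v\in\Lambda}$ be the unitary automorphy cocycle through which sections of $L^k$ pulled back to $\C^n$ transform under $\Lambda$, the series
\[
\sum_{v\in\Lambda}\chi_v(\tilde y)\,K^{\C^n}_k(\tilde x,\tilde y+v)
\]
is holomorphic in $\tilde x$, conjugate-holomorphic in $\tilde y$, $L$-equivariant in both arguments, and satisfies the reproducing property, so it descends to $K_k(x,y)$. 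This is the same identification underlying Theorem~\ref{thm-main}; on the diagonal, pairing the $v$ and $-v$ terms turns the phases into the cosine factor.

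Since $|\chi_v(\tilde y)|=1$, the triangle inequality combined with the Gaussian formula above gives
\[
|K_k(x,y)|_{h^k}\le\Bigl(\frac{k}{2\pi}\Bigr)^{\!n}\sum_{v\in\Lambda}\exp\!\Bigl(-\tfrac{k}{4}|\tilde x-\tilde y-v|_H^{2}\Bigr).
\]
Finally, as $v$ ranges over $\Lambda$, the vectors $\tilde x-\tilde y-v$ enumerate all displacements from some lift of $y$ to $\tilde x$; each such vector is the velocity of a unique smooth geodesic segment $\gamma\in\mathfrak{G}_{x,y}$, and $|\tilde x-\tilde y-v|_H=\ell(\gamma)$. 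Reindexing the lattice sum by $\mathfrak{G}_{x,y}$ yields the stated bound.

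The only real work lies in the second step: setting up the periodization and verifying its convergence, genuine $\Lambda$-equivariance in both variables, and the reproducing property in the presence of the automorphy cocycle. This is precisely the machinery already deployed in the proof of Theorem~\ref{thm-main}; once it is in place, the off-diagonal inequality is a one-line consequence of the triangle inequality together with the bijection between $\Lambda$-translates and the geodesic segments in $\mathfrak{G}_{x,y}$.
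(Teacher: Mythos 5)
Your proposal is correct and follows essentially the same route as the paper: the paper's proof invokes Theorem \ref{thm-k-gamma-bergman} to write $\pi^*K_{X,k}(z,w)=\sum_{v\in\Lambda}(A_v^*s_w(z))\otimes\bar s_w(w)$, a lattice periodization of translated peak sections (i.e.\ of the Bargmann--Fock kernel up to unitary factors), and then applies the triangle inequality and the identification of lattice translates with geodesic segments, exactly as you do. The only difference is presentational: the periodization machinery you flag as ``the only real work'' is already established in the paper as part of the proof of Theorem \ref{thm-main}, so the paper's proof of the off-diagonal bound is the same one-line consequence you describe.
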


The corresponding result in \cite{sun-rem-hyp} was the first to relate the global off-diagonal decay to the geodesic distance (compared to many local results, see the references therein); the theorem above provides a second instance, now in the flat setting.

\medskip
We briefly outline the proofs. The argument for Theorem \ref{thm-main} follows the same two-step strategy as in \cite{sun-rem-hyp}: first establish the formula for the cylinder (quotients of $\C^n$ by a cyclic subgroup), and then lift the global Bergman kernel to the universal cover and express it as a summation over the lattice $\Lambda$. This strategy parallels the construction behind the Selberg trace formula. 

Theorem \ref{thm-second} is obtained by integrating the kernel along complementary subtori to isolate the contributions of a chosen lattice direction; comparison of these averaged formulas recovers the holonomy data. Theorems \ref{thm-max-min} and \ref{thm-off-diag} are direct consequences of Theorem \ref{thm-main}, while the off-diagonal decay is obtained by estimating the contribution of peak sections and summing over geodesic segments joining two points.

\medskip

\noindent\textbf{Organization of the paper.}
In Section~2 we collect background material and derive the one-dimensional cylinder formula. Section~\ref{section-cylinder} treats the higher-dimensional cylinder and proves the cylinder case of Theorem~\ref{thm-main}. Section~\ref{sec-proof-thm-main} carries out the lattice summation on the universal cover and completes the proof of Theorem~\ref{thm-main}; the remaining theorems are then proved as applications.

\medskip

\noindent\textbf{Acknowledgements.} The author would like to thank Professor Song Sun for many very helpful discussions. 

\section{Background and preparations}
For the reader's convenience, we copy some background materials about Bergman kernels and holonomy of line bundles from \cite{sun-rem-hyp}.
\subsection*{Bergman kernel}
Let $(L,h)$ be a Hermitian holomorphic line bundle over a Kähler manifold $(X,\omega)$. The off-diagonal Bergman kernel of $\hcal_k$ (defined in the introduction) is defined as follows. 

For each point $y\in X$, valuation map $\val_y:\hcal_k\to L^k_y$ is a bounded functional. So there is a unique $L^2$-integrable holomorphic section $K_y$ of $L^k$ with values in $L^k_y$ such that \begin{equation}
	\langle s,K_y \rangle=s(y), \quad \forall s\in \hcal_k.
\end{equation}
So the Bergman kernel is defined as $K(x,y)\eqd K_y(x)$. If $\{s_i\}_{1\leq i<\infty}$ is an orthonormal basis of $\hcal_k$, then \[K(x,y)=\sum_{i=1}^{\infty}s_i(x)\otimes \bar{s}_i(y).\]
From the definition of $K(x,y)$, this summation is independent of the choice of the orthonormal basis.

\subsection*{Holonomy of line bundles}
Let $(L,h)\to M$ be a Hermitian line bundle endowed with a unitary connection $\nabla$. Fix a base point $x\in M$ and let $\gamma:[0,1]\to M$ be a oriented loop with $\gamma(0)=\gamma(1)=x$.

If $\gamma$ is piecewise $C^1$, parallel transport along $\gamma$ is defined by solving the ordinary differential equation
\[
\nabla_{\dot\gamma(t)}s(t)=0,\qquad s(0)=v\in L_x,
\]
which yields a unique section $s(t)$ along $\gamma$. The parallel transport map
\[
P_\gamma:\;L_x\to L_x,\qquad P_\gamma(v)\eqd s(1),
\]
is a unitary linear map of the fibre $L_x$, hence multiplication by a unit complex number. The holonomy of $(L,\nabla)$ along $\gamma$ is the element
\[
\operatorname{Hol}_\nabla(\gamma)\in\mathrm{U}(1)
\]
characterising $P_\gamma$, i.e. $P_\gamma=\operatorname{Hol}_\nabla(\gamma)\cdot\mathrm{Id}_{L_x}$. Since the Chern connection of a holomorphic line bundle is determined by the Hermitian metric $h$, we also use $\operatorname{Hol}_h(\gamma)$ or $\operatorname{Hol}_L(\gamma)$ to denote the holonomy of the Chern connection of $(L,h)$ along $\gamma$.
\subsection*{Poisson summation formula}

\begin{theorem}[Poisson summation formula]
Let $\hat{g}$ be the Fourier transform of $g$. Suppose $g(x) = \int_{\R} \hat{g}(y) e^{2\pi i x y} dy$ with $|g(x)| \leq A(1+|x|)^{-1-\delta}$ and $|\hat{g}(y)| \leq A(1+|y|)^{-1-\delta}$ for some $\delta > 0$. Then
\[
\sum_{c=-\infty}^{\infty} g(x + c) = \sum_{\xi=-\infty}^{\infty} \hat{g}(\xi) e^{2\pi i \xi x}.
\]
\end{theorem}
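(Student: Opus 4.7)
The plan is to periodize $g$ to the circle and identify its Fourier coefficients with the samples $\hat g(\xi)$; uniqueness of Fourier series then yields the identity. First I would set
\[
F(x) \;=\; \sum_{c\in\Z} g(x+c).
\]
By the decay $|g(x)|\le A(1+|x|)^{-1-\delta}$, this series is absolutely and uniformly convergent on compact subsets of $\R$, so $F$ is continuous, and $F$ is manifestly $1$-periodic.

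Next I would compute the Fourier coefficients of $F$ on $[0,1]$. For each $\xi\in\Z$, interchange of sum and integral (justified by the absolute convergence above) gives
\[
\int_0^1 F(x)\, e^{-2\pi i \xi x}\, dx \;=\; \sum_{c\in\Z}\int_0^1 g(x+c)\, e^{-2\pi i \xi x}\, dx \;=\; \int_\R g(y)\, e^{-2\pi i \xi y}\, dy \;=\; \hat g(\xi),
\]
using the substitution $y=x+c$ together with $e^{-2\pi i \xi c}=1$ for integer $\xi$ to unfold the sum of integrals over translates into a single integral over $\R$. Hence the Fourier coefficients of $F$ are exactly $\{\hat g(\xi)\}_{\xi\in\Z}$.

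I would then define $G(x)=\sum_{\xi\in\Z}\hat g(\xi)\, e^{2\pi i \xi x}$. The hypothesis $|\hat g(\xi)|\le A(1+|\xi|)^{-1-\delta}$ makes this series absolutely and uniformly convergent, so $G$ is continuous and $1$-periodic, and termwise integration shows its Fourier coefficients are exactly $\{\hat g(\xi)\}_{\xi\in\Z}$ as well. Thus $F$ and $G$ are two continuous $1$-periodic functions sharing all Fourier coefficients; by the uniqueness of Fourier series for continuous periodic functions (for instance, Fej\'er's theorem gives uniform convergence of the Ces\`aro means of the common Fourier series to each of $F$ and $G$), one concludes $F\equiv G$ pointwise, which is exactly the asserted identity at every $x\in\R$.

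The only delicate point is the last uniqueness step; everything else is a routine consequence of the two decay hypotheses, which guarantee the absolute convergence required to periodize $g$, to swap the sum with the integral defining the Fourier coefficient, and to make sense of the Fourier series on the right-hand side. One could equivalently bypass Fej\'er by noting that the absolute convergence of $\{\hat g(\xi)\}$ directly implies that $\sum_\xi \hat g(\xi)e^{2\pi i \xi x}$ converges uniformly to the continuous periodic function with those Fourier coefficients, which must therefore coincide with $F$.
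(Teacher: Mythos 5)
Your argument is correct: it is the classical periodization proof, and the two decay hypotheses are used exactly where they are needed (to justify the unfolding of $\sum_c \int_0^1 g(x+c)e^{-2\pi i\xi x}\,dx$ into $\int_{\R} g(y)e^{-2\pi i\xi y}\,dy$, and to get absolute uniform convergence of $\sum_\xi \hat g(\xi)e^{2\pi i\xi x}$ so that the uniqueness step for continuous periodic functions applies). The paper states this Poisson summation formula only as quoted background material and gives no proof of its own, so there is nothing to contrast with; your write-up matches the standard textbook argument the paper is implicitly relying on.
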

\subsection{Abelian varieties}

Let $V$ be a complex vector space of dimension $n$. A lattice $\Lambda$ in $V$ is a discrete subgroup of $V$ such that $\Lambda \cong \Z^{2n}$ and $\Lambda$ spans $V$ over $\R$. A complex torus is a quotient $X=V/\Lambda$. A complex torus $X$ is an abelian variety if it admits a polarization, i.e., a positive line bundle $L\to X$. In some literatures, polarization of $X$ is also referred to the first Chern class $c_1(L)$, which is given by a positive definite Hermitian form $H$ on $V$ such that the Riemann form $E=\Im H$ satisfies the integrality condition
\begin{equation}\label{eq-integral}
	\Im H(\Lambda,\Lambda)\subset \Z.
\end{equation}
The first Chern class $c_1(L)$ of $L$ is then represented by the K\"ahler form $ \frac{\sqrt{-1}}{2} \sum H_{ij} dz^i\wedge d\bar{z}^j$. 

We know that $E$ is a non-degenerate alternating form, and $H(u,v)=E(iu,v)+iE(u,v)$ for $u,v\in V$ (see \cite{lange2013complex} for more details). For $v\in V$, we also use $|v|$ for $|v|_H$ for simplicity.

\subsection{Flat model}
Let $z=(z_1,\dots,z_n)$ denote the standard complex coordinates on $\C^n$ and set
\[
\omega_0=\frac{\sqrt{-1}}{2}\sum_{i=1}^n dz_i\wedge d\bar z_i,\qquad
\phi_0=\tfrac12|z|^2,\qquad h_0=e^{-\phi_0}=e^{-\frac{1}{2}|z|^2}.
\]
Then $\iddbar\log h_0=\omega_0$. For each positive integer $k$ consider the weighted Bergman space
\[
\hcal_k=\Big\{f\in\ocal(\C^n):\int_{\C^n}|f|^2\,h_0^k\frac{\omega_0^n}{n!}<\infty\Big\},
\]
with the $L^2$--inner product induced by $h_0^k$ and the volume form $\omega_0^n/n!$. The corresponding Bergman kernel function is constant and equals
\[
\rho_k(p)=\Big(\frac{k}{2\pi}\Big)^n\qquad(\forall p\in\C^n).
\]

\subsection{Flat cylinder}
Fix $\eta>0$ and consider the punctured plane $\C^*$. Define
\[
\omega_\eta=\frac{\eta^2\sqrt{-1}}{2}\,\frac{dz\wedge d\bar z}{|z|^2},\qquad
h_\eta=e^{-\eta^2(\log|z|)^2}.
\]
A direct calculation gives $\iddbar\log h_\eta=\omega_\eta$. In the Riemannian metric induced by $\omega_\eta$ every circle $\{|z|=r\}$ has length $2\pi\eta$ (independent of $r$).

Fix $\alpha\in[0,1)$ and, for each integer $k\ge1$, set
\[
\mathfrak{m}_k=k\alpha-\lfloor k\alpha\rfloor \in[0,1).
\]
Let $\nabla$ be the Chern connection of the metric $h_\eta^k$ on the trivial line bundle over $\C^*$, and let $\nabla_{\mathfrak{m}_k}$ be the Chern connection of the metric $h_\eta^k|z|^{-2\mathfrak{m}_k}$. If $\gamma$ is a counterclockwise-oriented circle $\{|z|=r\}$, then the arguments of the holonomies of $\nabla$ and $\nabla_{\mathfrak{m}_k}$ along $\gamma$ differ by $2\pi\mathfrak{m}_k$.

Define the twisted Bergman space
\[
\hcal_{\eta,\alpha,k}=\Big\{f\in\O(\C^*):\int_{\C^*}|f|^2\,h_\eta^k|z|^{-2\mathfrak{m}_k}\,\omega_\eta<\infty\Big\},
\]
endowed with the inner product
\begin{equation}\label{eq-inner-product}
\langle f,g\rangle=\int_{\C^*} f\bar g\,h_\eta^k|z|^{-2\mathfrak{m}_k}\,\omega_\eta.
\end{equation}
We denote by $\rho_{\eta,\alpha,k}$ the Bergman kernel function of $\hcal_{\eta,\alpha,k}$ and refer to it as the twisted Bergman kernel on the flat cylinder $(\C^*,\omega_\eta)$ with parameter $\alpha$.

Clearly, $\{z^a\}_{a\in \Z}$ is an orthogonal basis for $\hcal_{\eta,\alpha,k}$. Let
\begin{equation}\label{eq-Ia}
	I_a=\int_{\C^*} |z|^{2a}h_\eta^k|z|^{-2\mathfrak{m}_k}\omega_\eta.
\end{equation}
Let $t=\log|z|$.
Then we have \begin{eqnarray*}
	I_a&=&2\pi \eta^2\int_{-\infty}^{\infty} e^{2(a-\mathfrak{m}_k)t}e^{-k\eta^2 t^2} dt\\
	&=&2\pi \eta^2 e^{\frac{1}{k\eta^2}(a-\mathfrak{m}_k)^2}\int_{-\infty}^{\infty}e^{-k\eta^2 \tau^2} d\tau\\
	&=&2\pi \eta^2 e^{\frac{1}{k\eta^2}(a-\mathfrak{m}_k)^2}\sqrt{\frac{\pi}{k\eta^2}}.
\end{eqnarray*}
Thus, we have \[\rho_{\eta,\alpha,k}(z)=h_\eta^k|z|^{-2\mathfrak{m}_k}\sum_{a\in \Z}\frac{|z|^{2a}}{I_a}=h_\eta^k|z|^{-2\mathfrak{m}_k}\frac{1}{2\pi}\sqrt{\frac{k}{\pi}}\frac{1}{\eta}\sum_{a\in \Z}e^{-\frac{1}{k\eta^2}(a-\mathfrak{m}_k)^2+2at}.\]
Let \begin{eqnarray*}
	F_t(\xi)&=&\int_{-\infty}^{\infty} e^{-2\pi i\xi a}e^{-\frac{1}{k\eta^2}(a-\mathfrak{m}_k)^2+2at} da\\
	&=&e^{-2\pi i\xi \mathfrak{m}_k+2t\mathfrak{m}_k}\int_{-\infty}^{\infty}e^{-\frac{1}{k\eta^2}b^2+2(t-\pi i\xi)b} db\\
	&=&e^{-2\pi i\xi \mathfrak{m}_k+2t\mathfrak{m}_k+k\eta^2(t-\pi i\xi)^2}\int_{-\infty}^{\infty}e^{-\frac{1}{k\eta^2}\big(b-k\eta^2(t-\pi i\xi)\big)^2} db.
\end{eqnarray*}
Then by contour integration, we have \begin{eqnarray*}
	F_t(\xi)=\sqrt{\pi k\eta^2}e^{-2\pi i\xi \mathfrak{m}_k+2t\mathfrak{m}_k+k\eta^2(t-\pi i\xi)^2}.
\end{eqnarray*}
Therefore, by Poisson summation formula, we have \begin{eqnarray}
	\rho_{\eta,\alpha,k}(z)&=&\frac{k}{2\pi}h_\eta^k|z|^{-2\mathfrak{m}_k}\sum_{\xi\in \Z}e^{-2\pi i\xi \mathfrak{m}_k+2t\mathfrak{m}_k+k\eta^2(t-\pi i\xi)^2}\\
\label{e-rho-cylinder}	&=&\frac{k}{2\pi}\sum_{\xi\in \Z}e^{-k\eta^2\pi^2\xi^2}e^{-2\pi \xi \mathfrak{m}_k i-2k\eta^2\pi\xi t i}.
\end{eqnarray}

\section{Higher dimensional cylinder} \label{section-cylinder}

Let $0\neq v\in (\C^n,\omega_0)$, where $\omega_0$ is the \kahler form corresponding to the standard Euclidean metric. Let $A_v$ be the isometry on $\C^n$ defined by $u\mapsto u+v$. The quotient space $\C^n/\langle A_v\rangle$ can be identified with $\C^{n-1}\times \C^*$. More precisely, by a unitary action, we can assume that $v=(0,a\sqrt{-1})$ for some $a>0$.
Let $z=(z_1,\dots,z_n)=(z',z_n)$ be the coordinates of $\C^n$.
Let $(w_1,\dots,w_n)$ be coordinates of $\C^{n-1}\times \C^*$. We define a map $Q$ from $\C^n$ to $\C^{n-1}\times \C^*$ by \[ Q(z)=(z_1,\dots,z_{n-1},e^{\frac{2\pi}{a}z_n}).\]
Clearly $Q(A_v z)=Q(z) $ for all $z\in \C^n$. So this induces a map $\tilde{Q}$ from $\C^n/\langle A_v\rangle$ to $\C^{n-1}\times \C^*$. Moreover, if $Q(x)=Q(y) $, it is easy to see that $x-y=mv$, for some integer $m$. So $\tilde{Q}$ is a biholomorphism. We will identify $\C^n/\langle A_v\rangle$ with $\C^{n-1}\times \C^*$ via $\tilde{Q}$. $\omega_0$ then descends to a \kahler form $\omega_v$ on $\C^{n-1}\times \C^*$. Since $\frac{dw_n}{w_n}=\frac{2\pi}{a}dz_n$, we have \begin{eqnarray*}
	\omega_v=\frac{\sqrt{-1}}{2}\sum_{i=1}^{n-1} d w_i\wedge d\bar{w}_i+\frac{\eta^2\sqrt{-1}}{2}\frac{dw_n \wedge d\bar{w}_n}{|w_n|^2},
\end{eqnarray*}
where $\eta=\frac{a}{2\pi}$. Since $\C^{n-1}\times \C^*$ is a Stein manifold, any holomorphic line bundle on it is trivial. We only need to consider the trivial line bundle.

Let \[h_v=e^{-\frac{1}{2}|z'|^2}h_\eta,\] where $h_\eta$ is the Hermitian metric on the trivial bundle on $\C^*$, studies in the subsection ``Flat cylinder''.

Fix a number $\alpha\in [0,1)$, we define $\mathfrak{m}_k=k\alpha-[k\alpha]$ for each integer $k\geq 1$. We study the Bergman space $\hcal_{v,\alpha,k}$ of $L^2$-integrable holomorphic functions on $\C^{n-1}\times \C^*$ with respect to the inner product \begin{equation}\label{eq-inner-product-cylinder}
	\langle f,g\rangle=\int_{\C^{n-1}\times \C^*} f\bar{g}h_v^k|w_n|^{-2\mathfrak{m}_k}\frac{\omega_v^n}{n!}.
\end{equation}
 Let $\rho_{v,\alpha,k}$ be the Bergman kernel function of $\hcal_{v,\alpha,k}$. We call $\rho_{v,\alpha,k}$ the twisted Bergman kernel function on the cylinder $(\C^{n-1}\times \C^*,\omega_v)$ with parameter $\alpha$. Let $\nabla_{v,\alpha,k}$ be the Chern connection of the metric $h_v^k|w_n|^{-2\mathfrak{m}_k}$ on the trivial line bundle over $\C^{n-1}\times \C^*$.
 
 The main result of this section is the following theorem.
 \begin{theorem}\label{thm-cylinder-main}
	Theorem \ref{thm-main}, with $\Lambda=\langle A_v\rangle$ and $\nabla_{v,\alpha,k}$ as the Chern connection, holds for $\hcal_{v,\alpha,k}$.
 \end{theorem}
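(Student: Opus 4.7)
The plan is to reduce to the one-dimensional cylinder computation already completed in the preceding subsection, by exploiting the product structure of $(\C^{n-1}\times\C^*,\omega_v)$ and of the weight in \eqref{eq-inner-product-cylinder}. Concretely, $h_v^k|w_n|^{-2\mathfrak{m}_k}=e^{-k|z'|^2/2}\cdot h_\eta^k|w_n|^{-2\mathfrak{m}_k}$ and $\omega_v^n/n!$ is the tensor product of the standard volume form on $\C^{n-1}$ and $\omega_\eta$ on $\C^*$. I would therefore first identify $\hcal_{v,\alpha,k}$, as a Hilbert space, with the tensor product of the standard weighted Bergman space on $(\C^{n-1},\omega_0,h_0^k)$ (whose kernel is the flat model $(k/2\pi)^{n-1}$) and $\hcal_{\eta,\alpha,k}$. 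The orthogonal basis $\{(w')^{\beta}w_n^{a}:\beta\in\N^{n-1},\,a\in\Z\}$ then makes the corresponding factorization of kernels immediate,
\[
\rho_{v,\alpha,k}(w',w_n)=\Big(\frac{k}{2\pi}\Big)^{n-1}\rho_{\eta,\alpha,k}(w_n).
\]

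Next I would substitute formula \eqref{e-rho-cylinder} and pair the $\xi$ and $-\xi$ terms to rewrite the result in the real, cosine form
\[
\rho_{v,\alpha,k}(w',w_n)=\Big(\frac{k}{2\pi}\Big)^{n}\bigg(1+\sum_{m\in\Z\setminus\{0\}}e^{-k\pi^2\eta^2 m^2}\cos\!\big(2\pi m(\mathfrak{m}_k+k\eta^2 t)\big)\bigg),\qquad t=\log|w_n|.
\]
Setting $a=2\pi\eta$ so that $|v|_H^2=a^2=4\pi^2\eta^2$, the Gaussian factor matches the desired one: $\tfrac{k}{4}|mv|_H^2=k\pi^2\eta^2m^2$. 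What remains is to verify that the phase $m(\mathfrak{m}_k+k\eta^2 t)$ agrees with $k\alpha_{mv}(p)$ modulo integers.

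To do this, I would compute the holonomy of $\nabla_{v,\alpha,k}$ along $\gamma_{p,mv}$ directly from its connection $1$-form. Lifting $v=(0,ai)$ through $Q$, the loop $\gamma_{p,mv}$ is the circle $\{|w_n|=r,\;w'=w'_0\}$ traversed $m$ times counterclockwise (where $r=e^{2\pi\Re z_n/a}$). The connection form is
\[
A=\partial\log\!\big(h_v^k|w_n|^{-2\mathfrak{m}_k}\big)=-\tfrac{k}{2}\!\sum_i \bar w_i\,dw_i-\big(k\eta^2\log|w_n|+\mathfrak{m}_k\big)\tfrac{dw_n}{w_n},
\]
and restricted to the loop this becomes $-i(k\eta^2 t+\mathfrak{m}_k)\,d\theta$. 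Therefore
\[
\mathrm{Hol}_{\nabla_{v,\alpha,k}}(\gamma_{p,mv})=\exp\!\Big(-\int_{\gamma_{p,mv}}\!A\Big)=\exp\!\big(2\pi i m(k\eta^2 t+\mathfrak{m}_k)\big),
\]
so $k\alpha_{mv}(p)\equiv m(k\eta^2 t+\mathfrak{m}_k)\pmod{1}$. Plugging this into the cosines finishes the identification.

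The only mildly delicate point is this last holonomy computation: one must keep careful track of the contribution from the twisting factor $|w_n|^{-2\mathfrak{m}_k}$ (which shifts the phase by $2\pi m \mathfrak{m}_k$) separately from the contribution of $h_v^k$ (which gives the $t$-dependent piece $2\pi m k\eta^2 t$). Once the sign conventions and orientation of $\gamma_{p,mv}$ are fixed, the remaining steps are purely algebraic, and the theorem follows by matching the two closed forms term by term in the lattice $\langle A_v\rangle\simeq\Z$.
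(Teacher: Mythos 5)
Your proposal is correct and follows essentially the same route as the paper: separate variables to reduce to the one-dimensional cylinder formula \eqref{e-rho-cylinder} (your tensor-product factorization is exactly the paper's computation of $J_{a,b}=I_b\cdot(2\pi)^{n-1}2^{|a|}a!/k^{|a|+n-1}$), then identify the resulting phases with the holonomies of $\nabla_{v,\alpha,k}$ along the loops $\gamma_{p,mv}$. The only (harmless) difference is that you obtain the holonomy by integrating the connection form $\partial\log\big(h_v^k|w_n|^{-2\mathfrak{m}_k}\big)$ directly, whereas the paper normalizes on the unit circle and uses a curvature-area argument to pass to general radii; both yield $\exp\big(2\pi i m(k\eta^2 t+\mathfrak{m}_k)\big)$.
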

 
 Clearly, $\{w'^{a} w_n^b\}_{a\in \N^{n-1},b\in \Z}$ is an orthogonal basis for $\hcal_{v,\alpha,k}$, where \[w'^{a}=\prod_{j=1}^{n-1}w_j^{a_j}.\]
 Let \begin{eqnarray*}
	J_{a,b}&=&\int_{\C^{n-1}\times \C^*} |w'^{a}w_n^b|^{2}h_v^k|w_n|^{-2\mathfrak{m}_k}\frac{\omega_v^n}{n!}\\
	 &=& I_b \int_{\C^{n-1}}\prod_{j=1}^{n-1}|w_j|^{2a_j}e^{-\frac{k}{2}|w'|^2}(\frac{\sqrt{-1}}{2})^{n-1}dw_1\wedge d\bar{w}_1\wedge \cdots dw_{n-1}\wedge d\bar{w}_{n-1},
\end{eqnarray*}
where $I_b$ is defined in \eqref{eq-Ia}. Let $|a|=\sum_{j=1}^{n-1}a_j$ and $a!=\prod_{j=1}^{n-1}a_j!$, then we have \[J_{a,b}=I_b\frac{(2\pi)^{n-1}2^{|a|}a!}{k^{|a|+n-1}}.\]
Therefore, we have \begin{eqnarray*}
	\rho_{v,\alpha,k}(w)&=&e^{-\frac{k}{2}|z'|^2}h^k_\eta|w_n|^{-2\mathfrak{m}_k}\sum_{a\in \N^{n-1},b\in \Z}\frac{|w'^{a}|^2|w_n^b|^2k^{|a|+n-1}}{I_b(2\pi)^{n-1}2^{|a|}a!}\\
	&=&e^{-\frac{k}{2}|z'|^2}h^k_\eta|w_n|^{-2\mathfrak{m}_k}\frac{k^{n-1}}{(2\pi)^{n-1}}e^{\frac{k}{2}|z'|^2}\sum_{b\in \Z}\frac{|w_n^b|^2}{I_b}.
\end{eqnarray*} Thus, by \eqref{e-rho-cylinder}, we have \begin{equation}\label{eq-rho-cylinder-final}
	\rho_{v,\alpha,k}(w)=\frac{k^{n}}{(2\pi)^{n}}\sum_{\xi\in \Z}e^{-k\eta^2\pi^2\xi^2}e^{-2\pi \xi \mathfrak{m}_k i-2k\eta^2\pi\xi t i}.
\end{equation}
For an oriented loop $\gamma$ in $\C^{n-1}\times \C^*$, we define the winding number of $\gamma$ as \[\frac{1}{2\pi i}\int_\gamma\frac{dw_n}{w_n}. \]

\begin{lemma}
	Let $p$ be a point with coordinates $w$. Let $\gamma$ be an oriented geodesic loop in $\C^{n-1}\times \C^*$ based at $p$ with winding number $-\xi$. Then the holonomy of $\nabla_{v,\alpha,k}$ along $\gamma$ equals \[e^{-2\pi \xi \mathfrak{m}_k i-2k\eta^2\pi\xi t i}. \]
\end{lemma}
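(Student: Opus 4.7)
The plan is to work with a global holomorphic frame and compute the Chern connection form explicitly, then integrate it along the geodesic loop. Since the trivial bundle is used, let $s$ denote the global holomorphic section $1$; its pointwise norm squared with respect to $h_v^k|w_n|^{-2\mathfrak m_k}$ is the function
\[
h = h_v^k|w_n|^{-2\mathfrak m_k} = e^{-\phi},\qquad
\phi(w) = \tfrac{k}{2}|w'|^2 + k\eta^2\bigl(\log|w_n|\bigr)^2 + 2\mathfrak m_k\log|w_n|.
\]
The Chern connection with respect to $s$ has the $(1,0)$-form $\theta_s=\partial\log h=-\partial\phi$, and an elementary computation in coordinates yields
\[
\partial\phi = \frac{k}{2}\sum_{j=1}^{n-1}\bar w_j\,dw_j \;+\;\bigl(k\eta^2\log|w_n|+\mathfrak m_k\bigr)\,\frac{dw_n}{w_n}.
\]
Consequently, for any piecewise-smooth loop $\gamma$, $\operatorname{Hol}_{\nabla_{v,\alpha,k}}(\gamma)=\exp\!\bigl(-\oint_\gamma\theta_s\bigr)=\exp\!\bigl(\oint_\gamma\partial\phi\bigr)$.

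The next step is to identify geodesic loops with prescribed winding number. The metric $\omega_v$ is a product: a flat Euclidean factor on $\C^{n-1}$ and the flat-cylinder factor $\eta^2|dw_n|^2/|w_n|^2$ on $\C^*$. Pulling back to the universal cover via $u=\log w_n$ converts the second factor to $\eta^2|du|^2$, so geodesics in $\C^{n-1}\times\C^*$ lift to straight complex lines in $(w',u)$. For a geodesic loop based at $p=(w'_0,w_{n,0})$, the $w'$-component must close up, forcing it to be constant, while the $u$-component must shift by a deck element $-2\pi i\xi$ (the sign is fixed by the winding-number convention $\tfrac{1}{2\pi i}\oint_\gamma dw_n/w_n=-\xi$). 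Therefore the loop is parametrized by
\[
w'(\tau)\equiv w'_0,\qquad w_n(\tau)=w_{n,0}\,e^{-2\pi i\xi\tau},\qquad \tau\in[0,1].
\]

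Finally, I substitute this parametrization into $\oint_\gamma\partial\phi$. The $w'$ part contributes zero since $w'$ is constant. Along the $w_n$-component $|w_n|\equiv|w_{n,0}|$, so $\log|w_n|=t$ is constant along $\gamma$, and $\tfrac{dw_n}{w_n}=-2\pi i\xi\,d\tau$, giving
\[
\oint_\gamma\partial\phi = \bigl(k\eta^2 t+\mathfrak m_k\bigr)\cdot(-2\pi i\xi) = -2\pi i\xi\mathfrak m_k-2\pi k\eta^2\xi t\,i.
\]
Exponentiating yields the claimed formula $\operatorname{Hol}_{\nabla_{v,\alpha,k}}(\gamma)=e^{-2\pi\xi\mathfrak m_k i-2\pi k\eta^2\xi t i}$.

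The only real subtlety is bookkeeping: fixing the sign of the winding number consistently with the Chern connection convention $\theta_s=\partial\log h$ (rather than $-\partial\log h$), and making sure the lifted loop in $u$-coordinates has the correct shift $-2\pi i\xi$. Once those conventions are pinned down, the rest is a direct line integral.
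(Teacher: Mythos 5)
Your proof is correct: the connection form computation, the sign conventions, and the identification of geodesic loops all check out, and the result agrees with the paper's reference computation (holonomy $e^{2\pi \mathfrak{m}_k i}$ for the counterclockwise unit circle, and the factor $e^{-2k\eta^2\pi\xi t i}$ for the radial displacement). The route is mildly different from the paper's. The paper anchors at the reference circle $\beta_{w'}=\{|w_n|=1\}$, whose holonomy it takes as known, and then computes the holonomy of $\gamma$ by a Stokes/curvature argument: the difference of arguments equals $k$ times the area of the cylinder region swept between $\gamma$ and $\beta_{w'}$, namely $k(2\pi\eta)(t\eta)=2k\pi\eta^2 t$; the case of general winding number is then handled by lifting to line segments in $\C^n$. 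You instead write down $\partial\phi$ explicitly in the global frame and integrate it along an explicit parametrization of the loop, which treats all winding numbers $\xi$ uniformly and makes the sign bookkeeping (winding number versus orientation, $\theta_s=\partial\log h$ versus its negative) completely transparent. The paper's version is shorter once the reference holonomy is granted; yours is more self-contained and verifies that reference value rather than asserting it. One small point worth retaining from your argument is the justification that the $w'$-component of a geodesic loop is constant: the deck group $\langle A_v\rangle$ translates only the $z_n$-coordinate, so the lifted straight segment has coinciding $z'$-endpoints and hence constant $z'$.
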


\begin{proof}
	Let $\beta_y$ be the circle $\{w\in \C^{n-1}\times \C^*\big|w'=y,|w_n|=1 \}$, considered as a loop based at $(y,1)$ with counterclockwise orientation. Clearly, the holonomy of $\nabla_{v,\alpha,k}$ along $\beta_y$ equals $e^{2\pi \mathfrak{m}_k i}$.
	When $\xi=-1$, the difference between the arguments of the holonomies of $\nabla_{v,\alpha,k}$ along $\gamma$ and along $\beta_{w'}$ is given by $k$ times the area of the region on $\{w'\}\times \C^*$ bounded by $\gamma$ and $\beta_{w'}$, which equals $k(2\pi \eta)(t\eta)=2k\pi \eta^2 t$. This proves the case when $\xi=-1$. For the general case, one just need to lift the geodesic loops to line segments in $\C^n$. So we have prove the lemma.
\end{proof}

\begin{proof}[Proof of Theorem \ref{thm-cylinder-main}]
	Since $|v|=2\pi\eta $, the theorem follows directly from the above lemma and Formula \ref{eq-rho-cylinder-final}.
\end{proof}

\section{Proof of the main results}\label{sec-proof-thm-main}
The proof of Theorem \ref{thm-main} is similar to the proof of Theorem 1 in \cite{sun-rem-hyp}. For the reader's convenience, we include the details here.

Let $(X,\omega,L,h)$ satisfy the setting of Theorem \ref{thm-main}. Let $\nabla$ be the Chern connection of $(L,h)$.
\subsection{Back to $\C^n$}\label{subsec-back-cn}
Let $\pi:\C^n\to X$ be the quotient map. We can choose coordinates $z=(z_1,\dots,z_n)$ on $\C^n$ such that \[\pi^*\omega=\frac{\sqrt{-1}}{2}\sum_{i=1}^n d z_i\wedge d\bar{z}_i.\]
We denote by $\omega_0=\frac{\sqrt{-1}}{2}\sum_{i=1}^n d z_i\wedge d\bar{z}_i$, $\phi_0=\frac12 |z|^2$.

$\pi^*L$ is defined as the subset \[ \pi^*L=\{(p,v)\in \C^n\times L\big| v\in L_{\pi(p)}\}.\] The Hermitian metric $\pi^*h$ on $\pi^*L$ is the pull-back of $h$.

Since $\pi^*L$ is a holomorphic line bundle on $\C^n$, it is trivial. Let $\bm{e}'$ be a holomorphic frame of $\pi^*L$ on $\C^n$. Then $\parallel \bm{e}'\parallel^2_{\pi^*h}=e^{-\varphi}$ for some smooth function $\varphi$ on $\C^n$ satisfying $\iddbar \varphi=\pi^*\omega=\omega_0$. Then $\varphi-\phi_0$ is harmonic. So there exists a holomorphic function $F$ on $\C^n$ such that $\varphi-\phi_0=\Re F$. Let $\bm{e}=e^{F/2}\bm{e}'$, then \[\parallel \bm{e}\parallel^2_{\pi^*h}=e^{-\phi_0}.\]
Similarly, for any $k\in \mathbb{N}$, we can define a frame $\bm{e}_k$ of $\pi^*L^k$ on $\C^n$ such that $\parallel \bm{e}_k\parallel^2_{\pi^*h}=e^{-k\phi_0}$.

Each $A_v$, for some $v\in \Lambda$, is lifted to a map \[\tilde{A_v}:\pi^*L^k\to \pi^*L^k, \quad (x,v)\mapsto (A_v(x),v).\] 
So for any $s\in H^0(\C^n,\pi^*L^k)$, we can define the pullback map $A_v^*:H^0(\C^n,\pi^*L^k)\to H^0(\C^n,\pi^*L^k)$ by 
\begin{equation}
	A_v^*s(x)=\tilde{A_v}^{-1} s(A_v(x)).
\end{equation}
If $f\in H^0(\C^n,\pi^*L^k)$, then $f=U\bm{e}_k$ for some holomorphic function $U$. Then we have
\[A_v^*s(x)=(A_v^*U\frac{A_v^*\bm{e}_k}{\bm{e}_k})\bm{e}_k,\]where $A_v^*U=U\circ A_v$.

Let \[\hcal_{X,k}=\{s\in H^0(X,L^k): \int_X |s|^2_h \frac{\omega^n}{n!}<\infty\},\] and \[\hcal_{\C^n,k}=\{s\in H^0(\C^n,\pi^*L^k): \int_{\C^n} |s|^2_{\pi^*h} \frac{\omega_0^n}{n!}<\infty\}.\] Let $K_{X,k}$ and $K_k$ be the Bergman kernels of $\hcal_{X,k}$ and $\hcal_{\C^n,k}$ respectively. Let $\rho_{X,k}$ and $\rho_{k}$ be the corresponding Bergman kernel functions.
Let $W_k=\pi^*\hcal_{X,k}\subset H^0(\C^n,\pi^*L^k)$. Clearly, for $s\in W_k$, $A_v^*s=s, \forall v\in \Lambda$. Let $F_\Lambda$ be a fundamental domain of the action of $\Lambda$ on $\C^n$. Then for any $s\in H^0(\C^n,\pi^*L^k)$ that is invariant under the action of $\Lambda$, $s\in V_k$ if and only if \[\int_{F_\Lambda} |s|^2_{\pi^*h}\frac{w_0^n}{n!}<\infty.\] 
With the $L^2$-norm defined by integrating over $F_\Lambda$, $W_k$ is then a Hilbert space.
$\pi^*K_{X,k}$ is then the Bergman kernel of $W_k$ in the sense that, for any $s\in W_k$, \[\int_{F_\Lambda} (s(z),\pi^*K_{X,k}(z,w))_{\pi^*h}\frac{w_0^n}{n!}(z)=s(w),\]
for any $w\in \C^n$.
\subsection{Proof of Theorem \ref{thm-main}}

We can give $\Lambda$ an order according to the norm $|v|$. We denote by $s_w$ the peak section of $\hcal_{\C^n,k}$ at $w\in \C^n$.
It is easy to see that $s_0=(\frac{k}{2\pi})^{n/2}\bm{e}_k$ is a peak section at $0\in \C^n$ for $\hcal_{\C^n,k}$. So $A_v^*s_0$ is a peak section at $-v\in \C^n$.
\begin{lemma}\label{lem-sum-g-f-0}
	Let $f_0=\bm{e}_k$. Then when $k>0$, $\sum_{v\in \Lambda}A_v^*f_0$ converges locally uniformly and absolutely.
\end{lemma}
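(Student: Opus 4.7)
The plan is to reduce the claim to convergence of a Gaussian theta-like series on $\Lambda$. Writing $A_v^* f_0 = U_v\,\bm{e}_k$ for a holomorphic function $U_v$ on $\C^n$, what I need is a pointwise bound on $U_v$ in terms of $|v|$, uniform in $z$ on compact sets.

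The first step is to read off $|U_v|$ from the fact that the lift $\tilde A_v$ is a fiberwise isometry of $(\pi^*L^k,\pi^*h)$. Applying this to $A_v^* f_0(z) = \tilde A_v^{-1} f_0(z+v)$ and using $\|\bm{e}_k\|^2_{\pi^*h} = e^{-\frac{k}{2}|z|^2}$, a short computation yields
\[
|U_v(z)|^2 \;=\; \exp\!\bigl(-\tfrac{k}{2}|v|^2 - k\Re(\bar v\cdot z)\bigr),
\]
where $\bar v\cdot z = \sum_i \bar v_i z_i$. The cross term is the real part of a function holomorphic in $z$, so the right-hand side is the modulus squared of a holomorphic function, and uniqueness up to a unimodular constant then forces
\[
U_v(z) \;=\; c_v\,\exp\!\bigl(-\tfrac{k}{4}|v|^2 - \tfrac{k}{2}\,\bar v\cdot z\bigr), \qquad |c_v|=1.
\]

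With this formula in hand, on a ball $\{|z|\le R\}$ I get the bound
\[
|U_v(z)|\;\le\;\exp\!\bigl(-\tfrac{k}{4}|v|^2+\tfrac{k}{2}R|v|\bigr).
\]
Since $\Lambda\cong\Z^{2n}$ is a lattice in $\C^n$, the quadratic Gaussian term dominates the linear growth---for $|v|\ge 4R$ the exponent is at most $-\tfrac{k}{8}|v|^2$---so comparison with the standard theta series $\sum_{v\in\Lambda} e^{-\frac{k}{8}|v|^2}$ shows the sum converges for every $R>0$. Therefore $\sum_{v\in\Lambda} U_v(z)$, and hence $\sum_v A_v^* f_0 = \bigl(\sum_v U_v\bigr)\bm{e}_k$, converges uniformly and absolutely on every compact subset of $\C^n$.

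No substantive obstacle is expected; the one point worth flagging is the passage from the metric identity for $|U_v|$ to an explicit formula for $U_v$ itself, which works cleanly because the cross term $2\Re(\bar v\cdot z)$ is the real part of a holomorphic expression. The explicit formula is in fact more than the lemma strictly requires---the crude inequality $|z+v|^2\ge\tfrac12|v|^2-|z|^2$ already gives the needed Gaussian decay---but it will be convenient in the lattice-sum manipulations that enter the proof of Theorem~\ref{thm-main}.
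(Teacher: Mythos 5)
Your proof is correct and follows essentially the same route as the paper: the key point in both is the Gaussian bound $|A_v^* f_0(z)|_{\pi^*h}=e^{-\frac{k}{4}|z+v|^2}$ (which is equivalent to your bound on $|U_v|$ via the frame $\bm{e}_k$), after which convergence follows from summability of a Gaussian theta series over the lattice, i.e.\ from the polynomial growth of the lattice point count. The explicit formula $U_v(z)=c_v\exp(-\tfrac{k}{4}|v|^2-\tfrac{k}{2}\bar v\cdot z)$ is a correct extra that the paper does not derive at this point, but as you note it is not needed for the lemma itself.
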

\begin{proof}
	We have $|f_0(z)|_h=e^{-\frac{k}{4}|z|^2}=e^{-\frac{k}{4}d^2(0,z)}$. So for each $v\in \Lambda$, \[|A_v^*f_0(z)|_h=e^{-\frac{k}{4}d^2(-v,z)}.\]
	Since the number $N_M\eqd\#\{v\in \Lambda\big| |v|<M \}$ grows at a polynomial speed, it is easy to get the conclusion of the lemma.
\end{proof}

\begin{lemma}\label{lem-sum-g-f-0-in-v-k}
	For $k>0$, we have $\sum_{v\in \Lambda}A_v^*f_0\in W_k$. 
\end{lemma}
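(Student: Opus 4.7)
The plan is to verify the two defining conditions of $W_k=\pi^\ast\hcal_{X,k}$ for the section $s:=\sum_{v\in\Lambda}A_v^\ast f_0$, namely that $s$ is $\Lambda$-invariant and that it is $L^2$ when integrated against $\pi^\ast h^k$ over a fundamental domain $F_\Lambda$.

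For the invariance, I would simply reindex: for any fixed $u\in\Lambda$,
\[
A_u^\ast s \;=\; A_u^\ast\!\sum_{v\in\Lambda}A_v^\ast f_0 \;=\; \sum_{v\in\Lambda}(A_v A_u)^\ast f_0 \;=\; \sum_{v\in\Lambda}A_{u+v}^\ast f_0 \;=\; s,
\]
where the interchange of $A_u^\ast$ with the infinite sum is justified by the locally uniform and absolute convergence proved in Lemma \ref{lem-sum-g-f-0}, and where the reindexing $w=u+v$ is a bijection of $\Lambda$. Since $s$ is $\Lambda$-invariant and holomorphic, it descends to a holomorphic section of $L^k$ on $X$.

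For the $L^2$ estimate, I would use the pointwise Gaussian identity $|A_v^\ast f_0(z)|_{\pi^\ast h}=e^{-\frac{k}{4}|z+v|^2}$ already used in the proof of Lemma \ref{lem-sum-g-f-0}. Fix a bounded fundamental domain $F_\Lambda$ of diameter $D:=\mathrm{diam}(F_\Lambda)<\infty$. For $z\in F_\Lambda$ and $v\in\Lambda$ with $|v|>D$, the triangle inequality gives $|z+v|\ge |v|-D$, so
\[
|s(z)|_{\pi^\ast h}\;\le\;\sum_{v\in\Lambda}e^{-\frac{k}{4}|z+v|^2}\;\le\;C_0+\sum_{\substack{v\in\Lambda\\ |v|>D}}e^{-\frac{k}{4}(|v|-D)^2},
\]
where $C_0$ bounds the finitely many near-lattice terms. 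Because $\#\{v\in\Lambda:|v|\le M\}$ grows polynomially in $M$, this last series converges, yielding a uniform bound $|s(z)|_{\pi^\ast h}\le C$ on $F_\Lambda$ independent of $z$.

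Since $F_\Lambda$ has finite Euclidean volume, this uniform bound immediately gives
\[
\int_{F_\Lambda}|s|_{\pi^\ast h}^2\,\frac{\omega_0^n}{n!}\;\le\;C^{2}\,\vol(F_\Lambda)\;<\;\infty,
\]
so that $s\in W_k$, completing the proof. No real obstacle arises here: the only point that requires a moment of care is the interchange of $A_u^\ast$ with the lattice sum in the invariance step, but this is justified by the locally uniform absolute convergence already established in the preceding lemma.
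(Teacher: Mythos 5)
Your proof is correct, and it reaches the conclusion by a somewhat different route than the paper. The paper's argument works entirely in $L^2(F_\Lambda)$: it reduces the claim, via the triangle inequality in that Hilbert space, to showing $\sum_{v\in\Lambda}\bigl(\int_{F_\Lambda}|A_v^*f_0|_h^2\,\omega_0^n/n!\bigr)^{1/2}<\infty$, and bounds each summand by the Gaussian tail estimate $\int_{d(0,z)>R}|f_0|^2_h\,\omega_0^n/n!<C(kR^2)^{n-1}e^{-kR^2}$ applied with $R$ comparable to $|v|-\mathrm{diam}(F_\Lambda)$; the resulting series converges by the polynomial growth of $N_M$. You instead establish a uniform pointwise bound for $\sum_{v}|A_v^*f_0|_{\pi^*h}$ on the bounded set $F_\Lambda$ (making the locally uniform convergence of Lemma \ref{lem-sum-g-f-0} quantitative there) and then use $L^\infty(F_\Lambda)\subset L^2(F_\Lambda)$ because $\vol(F_\Lambda)<\infty$. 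Both arguments rest on the same two ingredients, the Gaussian decay of $|A_v^*f_0|$ away from $-v$ and the polynomial growth of the lattice counting function, so the difference is mostly one of packaging; your version is marginally more elementary, and you also spell out the $\Lambda$-invariance of the sum (needed for membership in $W_k=\pi^*\hcal_{X,k}$), which the paper leaves implicit. The one thing the paper's $L^2$-norm summation buys that your argument does not is robustness when $F_\Lambda$ is unbounded and of infinite volume, which is precisely the situation in the non-compact generalization ($\rank\Lambda<2n$) mentioned in the remark after Theorem \ref{thm-main}; your final step uses the boundedness and finite volume of the fundamental domain in an essential way. For the lemma as stated, in the compact setting, this is immaterial and your proof is complete.
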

\begin{proof}
	It suffices to show that \[\sum_{v\in \Lambda}(\int_{F_\Lambda} |A_v^*f_0|_h^2\frac{w_0^n}{n!})^{1/2}<\infty. \]	
	We have \[\int_{d(0,z)>R}|A_v^*f_0|_h^2\frac{w_0^n}{n!}<C(kR^2)^{n-1}e^{-kR^2},  \] for some constant $C$ depending only on $n$. So the lemma follows from the polynomial growth of $N_M$ as in the proof of Lemma \ref{lem-sum-g-f-0}.
\end{proof}

Let $K_k(z,w)$ be the Bergman kernel of $\hcal_{\C^n,k}$. Then we can write \[K_k(z,w)=\sum_{i=1}^{\infty}f_i(z)\otimes \bar{f}_i(w),\] where $\{f_i\}_{i=1}^\infty$ is an orthonormal basis of $\hcal_{\C^n,k}$.
For each $v\in \Lambda$, we define \[K^v_k(z,w)=\sum_{i=1}^{\infty}A_v^*f_i(z)\otimes \bar{f}_i(w).\]
\begin{proposition}
	$K^v_k(z,w)=\sum_{i=1}^{\infty}A_v^*f_i(z)\otimes \bar{f}_i(w)$ is independent of the choice of orthonormal basis $\{f_i\}_{i=1}^\infty$ of $\hcal_{\C^n,k}$.
\end{proposition}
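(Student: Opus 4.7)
The plan is to mimic the standard proof that the Bergman kernel is independent of the choice of orthonormal basis, the only extra input being the linearity (and indeed unitarity) of the pullback operator $A_v^*$ on $\hcal_{\C^n,k}$. Once this is in hand, a unitary change of basis produces cross terms that collapse via the identity $\sum_j u_{ij}\bar u_{lj}=\delta_{il}$, returning the same expression.

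First I would verify that $A_v^*$ is a well-defined linear isometry of $\hcal_{\C^n,k}$. The translation $A_v$ is a Euclidean isometry, hence preserves the volume form $\omega_0^n/n!$; the lift $\tilde A_v$ is, by construction of $\pi^* L^k$ and $\pi^*h$, fibre-wise unitary. These two facts together yield the pointwise identity $|A_v^* s(z)|_{\pi^*h}=|s(A_v(z))|_{\pi^*h}$, and translation invariance of the integral gives $\|A_v^* s\|=\|s\|$. In particular $A_v^* s\in\hcal_{\C^n,k}$ whenever $s\in\hcal_{\C^n,k}$, and $A_v^*$ is $\C$-linear.

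Given another orthonormal basis $\{g_j\}$ of $\hcal_{\C^n,k}$, there is a unitary matrix $U=(u_{ij})$ with $g_j=\sum_i u_{ij}f_i$, and linearity gives $A_v^* g_j=\sum_i u_{ij}\,A_v^* f_i$. Expanding and interchanging the order of summation,
\[
\sum_j A_v^* g_j(z)\otimes\bar g_j(w)
=\sum_{i,l}\Bigl(\sum_j u_{ij}\bar u_{lj}\Bigr)A_v^* f_i(z)\otimes\bar f_l(w)
=\sum_i A_v^* f_i(z)\otimes\bar f_i(w),
\]
by unitarity of $U$.

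The interchange of summation is justified by absolute convergence, which follows from the pointwise Cauchy--Schwarz estimate
\[
\sum_i |A_v^* f_i(z)\otimes\bar f_i(w)|_{\pi^*h}
\le\Bigl(\sum_i |A_v^* f_i(z)|^2_{\pi^*h}\Bigr)^{1/2}\Bigl(\sum_i |f_i(w)|^2_{\pi^*h}\Bigr)^{1/2};
\]
by the fibre-wise isometry of $\tilde A_v$, the first factor equals $\rho_k(A_v(z))^{1/2}$ and the second $\rho_k(w)^{1/2}$, both of which are the constant $(k/2\pi)^{n/2}$ on $\C^n$ by the flat model computation. I do not anticipate any serious obstacle; the argument is essentially bookkeeping that rests on the isometric character of $\tilde A_v$ covering the Euclidean translation $A_v$.
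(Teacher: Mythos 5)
Your conclusion is correct and your overall route --- a direct unitary change of basis --- is genuinely different from the paper's. The paper instead fixes $w$, chooses the basis so that $f_1=s_w$ is a peak section, shows that $\int_{\C^n}\big(f,\sum_i A_v^*f_i(\cdot)\otimes\bar f_i(w)\big)_{\pi^*h}\,\tfrac{\omega_0^n}{n!}=(A_{-v}^*f)(w)$ for every $f\in\hcal_{\C^n,k}$, and concludes that $K^v_k(\cdot,w)$ is the Riesz representative of the bounded functional $\val_w\circ A_{-v}^*$, which is manifestly basis-free. Your preliminary step (that $A_v^*$ is a well-defined unitary of $\hcal_{\C^n,k}$, because $\tilde A_v$ is fibrewise unitary for $\pi^*h$ and $A_v$ preserves $\omega_0^n/n!$) is correct and is also the input the paper needs when it asserts that $\{A_v^*f_i\}$ is again an orthonormal basis.

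There is, however, a genuine gap in your justification of the interchange of summation. The Cauchy--Schwarz estimate you give controls only the diagonal sum $\sum_i|A_v^*f_i(z)|\,|f_i(w)|$, whereas the reordering you perform requires absolute convergence of the triple sum $\sum_{j,i,l}|u_{ij}|\,|u_{lj}|\,|A_v^*f_i(z)|\,|f_l(w)|$. Summing over $j$ first and using $\sum_j|u_{ij}||u_{lj}|\le1$ leaves the bound $\big(\sum_i|A_v^*f_i(z)|\big)\big(\sum_l|f_l(w)|\big)$, a product of $\ell^1$ norms of sequences that are only guaranteed to lie in $\ell^2$ (their squared $\ell^2$ norms being the Bergman densities you cite); for a badly chosen orthonormal basis these $\ell^1$ norms are infinite, and the triple sum is not absolutely convergent in general, so Fubini is not available. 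The repair is short and stays within your framework: writing $a_i=A_v^*f_i(z)$ and $b_l=f_l(w)$ in the trivialization by $\bm{e}_k$, the quantity $\sum_j A_v^*g_j(z)\,\bar g_j(w)$ equals $\sum_j (U^{T}a)_j\overline{(U^{T}b)_j}=\langle U^{T}a,U^{T}b\rangle_{\ell^2}=\langle a,b\rangle_{\ell^2}$ because $U^{T}$ is unitary on $\ell^2$; that is, the sum is an $\ell^2$ inner product of coefficient sequences and is preserved by Parseval, with no termwise reordering needed. Equivalently, and closest to the paper's spirit, one can note that $\sum_j \bar g_j(w)\,g_j$ converges in $\hcal_{\C^n,k}$ to the basis-free reproducing element $K_k(\cdot,w)$, and that $K^v_k(\cdot,w)=A_v^*\big(K_k(\cdot,w)\big)$ by continuity of $A_v^*$ and of point evaluation.
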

\begin{proof}
	For any $f\in \hcal_{\C^n,k}$, we have $f=\sum_{i=1}^{\infty}c_i A_v^*f_i$, since $\{A_v^*f_i\}$ is also an orthonormal basis of $\hcal_{\C^n,k}$. Then for any fixed $w$, \[\int_{\C^n} (f,\sum_{i=1}^{\infty}A_v^*f_i(z)\otimes \bar{f}_i(w))_{\pi^*h}\frac{w_0^n}{n!}=\sum_{i=1}^{\infty}c_i f_i(w)=(A_v^{-1})^*f(w), \]
	namely, $K^v_k(z,w)$ is the representation element for the functional $\val_w\circ A_{-v}^*$, hence independent of the choice of the orthonormal basis.
\end{proof}
\begin{defandthm}\label{def-and-thm-k-gamma}
	We define
\[K_{k}^\Lambda(z,w)\eqd \sum_{v\in \Lambda}K^v_k(z,w). \]
Then for $k>0$, and for each fixed $w$, it converges locally uniformly and absolutely. Moreover, we have that for $k>0$, $\sum_{v\in \Lambda}K^v_k(z,w)\in W_k$ for each fixed $w$.
\end{defandthm}
\begin{proof}
	Since $K^v_k(z,w)$ is independent of the choice of orthonormal basis, for each fixed $w$, we can choose the orthonormal basis $\{f_i\}$ such that $f_1=s_w$, a peak section at $w$. So $f_i(w)=0$ for $i>1$. So we have \[K^v_k(z,w)=A_v^*s_w(z)\otimes \bar{s}_w(w).\]
	So by Lemma \ref{lem-sum-g-f-0}, we have $\sum_{v\in \Lambda}K^v_k(z,w)$ converges locally uniformly and absolutely. And by Lemma \ref{lem-sum-g-f-0-in-v-k}, $\sum_{v\in \Lambda}K^v_k(z,w)\in W_k$ for each fixed $w$.
\end{proof}
\begin{lemma}\label{lem-int-s-sum-g-s-p}
	For any $s\in W_k$, we have 
	\[\int_{F_\Lambda}(s,\sum_{v\in \Lambda}A_v^*s_p)_{\pi^*h}\frac{\omega_0^n}{n!}=(\frac{2\pi}{k})^{n/2}\frac{s(p)}{s_p(p)}.\]
\end{lemma}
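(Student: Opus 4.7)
\emph{Proof plan.} The plan is to unfold the integral over the fundamental domain $F_\Lambda$ into a single integral over $\C^n$ and then evaluate it by a reproducing-type identity for the flat Bargmann-Fock kernel. The structural fact that makes unfolding work is that $s\in W_k$ is $\Lambda$-invariant while each translate $A_v^*s_p$ is (essentially) a peak section at $A_{-v}(p)$, so pairing against $s$ and summing over $v$ collapses the series into an integral over the universal cover.

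First I would interchange summation and integration. Since the peak section has Gaussian decay $|s_p(z)|_{\pi^*h}\le Ce^{-\frac{k}{4}|z-p|^2}$, the same polynomial-growth argument as in Lemma~\ref{lem-sum-g-f-0-in-v-k} (applied to $s_p$ in place of $f_0$) shows that $\sum_{v\in\Lambda}A_v^*s_p$ converges absolutely and locally uniformly, and also in $L^2(F_\Lambda)$. Next, for each $v$, I would use $A_v^*s=s$ together with the fiberwise unitarity of $\widetilde{A}_v$ on $\pi^*L^k$ to rewrite
\[
(s(z),A_v^*s_p(z))_{\pi^*h,\,z}=(s(A_vz),s_p(A_vz))_{\pi^*h,\,A_vz},
\]
so that translating the integration variable by $A_v$ gives $\int_{F_\Lambda}(s,A_v^*s_p)_{\pi^*h}\,\frac{\omega_0^n}{n!}=\int_{A_v(F_\Lambda)}(s,s_p)_{\pi^*h}\,\frac{\omega_0^n}{n!}$. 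Summing over $v$ and using that $\{A_v(F_\Lambda)\}_{v\in\Lambda}$ tiles $\C^n$ up to a null set collapses the expression to $\int_{\C^n}(s,s_p)_{\pi^*h}\,\frac{\omega_0^n}{n!}$.

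The last step is to evaluate this global integral. Although $s\notin\hcal_{\C^n,k}$, the $\Lambda$-invariance of $|s|_{\pi^*h}$ bounds it uniformly on $\C^n$, while $|s_p|_{\pi^*h}$ decays like a Gaussian, so the integrand is absolutely integrable. In the trivialization $\bm{e}_k$ with $|\bm{e}_k|^2=e^{-k\phi_0}$, $s_p$ is (up to phase) the standard Bargmann-Fock peak section at $p$; expanding $s=f\bm{e}_k$ as a Taylor series around $p$ reduces the integral to one-variable Gaussian integrals, and by orthogonality only the constant term $f(p)$ survives. Collecting constants then produces the stated factor $(2\pi/k)^{n/2}\,s(p)/s_p(p)$. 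I expect this last step to be the main obstacle, because it extends the Bargmann-Fock reproducing identity from the usual $L^2$ class to the larger class $W_k$ of sections that are only $L^2$ after quotienting by $\Lambda$; once one observes that the explicit Gaussian weight tames the at-most-$e^{k|z|^2/4}$ growth of $|f|$, the term-by-term Taylor computation goes through as in the standard Fock case, and the unfolding steps themselves are routine once absolute convergence is in hand.
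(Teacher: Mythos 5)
Your proposal is correct and follows essentially the same route as the paper: interchange the sum and the integral using the Gaussian decay of the peak section, unfold the $\Lambda$-sum of integrals over $F_\Lambda$ into a single integral of $(s,s_p)_{\pi^*h}$ over $\C^n$ via the tiling by translates, and then evaluate that integral by Taylor-expanding $s/\bm{e}_k$ at $p$ so that Gaussian orthogonality kills all but the constant term. The extra care you take in justifying absolute convergence and the extension of the Bargmann--Fock reproducing identity to the merely $\Lambda$-periodic class $W_k$ is exactly the content the paper compresses into the observation that $s$ is bounded and $s_0$ is $L^1$ on $\C^n$.
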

\begin{proof}
	It suffices to show this for $p=0$. By Lemmas \ref{lem-sum-g-f-0} and \ref{lem-sum-g-f-0-in-v-k}, 
	\[ \int_{F_\Lambda}(s,\sum_{v\in \Lambda}A_v^*s_0)_{\pi^*h}\frac{\omega_0^n}{n!}=\sum_{v\in \Lambda}\int_{F_\Lambda}(s,A_v^*s_0)_{\pi^*h}\frac{\omega_0^n}{n!}.\]
	On the other hand, since $s_0=(\frac{k}{2\pi})^{n/2}\bm{e}_k$ is $L_1$ integrable on $\C^n$, and $s$ is bounded on $\C^n$, we have $(s,s_0)_{\pi^*h}\in L^1(\C^n,\frac{\omega_0^n}{n!})$. So \[\sum_{v\in \Lambda}\int_{F_\Lambda}(s,A_v^*s_0)_{\pi^*h}\frac{\omega_0^n}{n!}=\int_{\C^n} (s,s_0)_{\pi^*h}\frac{\omega_0^n}{n!}.\]
 Then by taking the Taylor expansion of $\frac{s}{\bm{e}_k}$, we get the right hand side equals $(\frac{2\pi}{k})^{n/2}\frac{s(0)}{\bm{e}_k(0)}=\frac{s(0)}{s_0(0)}$.
\end{proof}
\begin{theorem}\label{thm-k-gamma-bergman}
	For $k>0$, $K_{k}^\Lambda(z,w)$ is the Bergman kernel of $W_k$.
\end{theorem}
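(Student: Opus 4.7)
A reproducing kernel of $W_k$ is characterized by two properties: membership, $K(\cdot,w)\in W_k$ for each fixed $w$; and the reproducing identity $\int_{F_\Lambda}(s,K(\cdot,w))_{\pi^*h}\,\frac{\omega_0^n}{n!}=s(w)$ for every $s\in W_k$. Membership is exactly the content of Definition and Theorem~\ref{def-and-thm-k-gamma}, so my task reduces to verifying the reproducing identity for $K_k^\Lambda$.

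The strategy is to exploit the basis-independence of $K_k^v(z,w)$ established in the preceding proposition. For each fixed $w$, I would select the orthonormal basis $\{f_i\}_{i=1}^\infty$ of $\hcal_{\C^n,k}$ with $f_1=s_w$, the unit-norm peak section at $w$. Since $f_i(w)=0$ for every $i\ge 2$, each sum $K_k^v(z,w)=\sum_i A_v^* f_i(z)\otimes \bar f_i(w)$ collapses to its first term, and hence
\[
K_k^\Lambda(z,w)=\Bigl(\sum_{v\in\Lambda} A_v^* s_w(z)\Bigr)\otimes \bar s_w(w).
\]

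With this factored form in hand, the factor $\bar s_w(w)$ separates from the integral, leaving the scalar quantity
\[
\int_{F_\Lambda}\Bigl(s,\sum_{v\in\Lambda}A_v^* s_w\Bigr)_{\pi^*h}\frac{\omega_0^n}{n!},
\]
which is exactly what Lemma~\ref{lem-int-s-sum-g-s-p} computes in closed form. Combining that evaluation with the peak-section normalization $|s_w(w)|^2_{\pi^*h}=(k/2\pi)^n$ and the pairing of $\bar s_w(w)$ against $s_w(w)$ should recover $s(w)\in L^k_w$ exactly, completing the verification.

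The only delicate point I anticipate is bookkeeping: one must carefully track the tensor-product interpretation of the Bergman kernel as a section of $L^k\otimes\overline{L^k_w}$, the way $\bar s_w(w)$ couples back with $s_w(w)$ under the Hermitian pairing, and the interplay between the prefactor in Lemma~\ref{lem-int-s-sum-g-s-p} and the normalization of the peak section. Once the conjugations and constants are aligned consistently, both defining properties of the Bergman kernel are verified and $K_k^\Lambda$ is identified as the Bergman kernel of $W_k$.
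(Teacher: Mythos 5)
Your proposal is correct and follows essentially the same route as the paper: both reduce to the factored form $K_k^\Lambda(\cdot,w)=\bigl(\sum_{v\in\Lambda}A_v^*s_w\bigr)\otimes\bar s_w(w)$ via the peak-section basis and then invoke Lemma~\ref{lem-int-s-sum-g-s-p}, with membership in $W_k$ already supplied by Definition and Theorem~\ref{def-and-thm-k-gamma}. The constant bookkeeping you flag resolves immediately once one notes that the integral in Lemma~\ref{lem-int-s-sum-g-s-p} equals $s(w)/s_w(w)$ (as its proof shows), so pairing against $s_w(w)$ returns $s(w)$ with no leftover normalization factor.
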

\begin{proof}
 For fixed $w$, we have \[K_{k}^\Lambda(z,w)=(\sum_{v\in \Lambda}A_v^*s_w(z))\otimes \bar{s}_w(w).\]
	So for each $S\in \hcal_{X,k}$, we have \[\int_{F_\Lambda} (S,K_{k}^\Lambda(z,w))_{\pi^*h}\frac{\omega_0^n}{n!}=\frac{S(w)}{s_w(w)}s_w(w)=S(w).\]
	We have proved the theorem.
\end{proof}
\begin{proof}[Proof of Theorem \ref{thm-main}]
	 For a point \(p\in X\) choose a lift \(\tilde p\in\C^n\); without loss of generality we may take \(\tilde p=0\). By Theorem \ref{thm-k-gamma-bergman} we have the identity
    \[
    \rho_{X,k}(0)=K_{k}^\Lambda(0,0)
    = (\frac{k}{2\pi})^n \;+\; \sum_{v\in\Lambda\setminus\{0\}} K^v_k(0,0).
    \]
	Recall that an element $v\in \Lambda$ is primitive if it can not be written as $v=mu$ for some $u\in\Lambda$ and $m>1$.
    Let \(\mathcal P\) denote the set of primitive elements of \(\Lambda\). Writing \(\langle v\rangle=\{mv:\,m\in\Z \}\) for the cyclic subgroup generated by \(v\in\mathcal P\), we may reorganise the sum as
    \[
    \sum_{v\in\Lambda\setminus\{0\}} K^v_k(0,0)
    = \frac12\sum_{v\in\mathcal P}\sum_{u\in\langle v\rangle\setminus\{0\}} K^{u}_k(0,0).
    \]

    Fix \(v\in\mathcal P\). The quotient \(\C^n/\langle v\rangle\) is a cylinder studied in Section \ref{section-cylinder}.Let
    \(\pi_v:\C^n\to \C^n/\langle v\rangle\) be the quotient map and \(\chi_v:\C^n/\langle v\rangle\to X\) the natural covering with \(\pi=\chi_v\circ\pi_v\). For each $u\in \langle v\rangle$, let $e^{2\pi \alpha_{u}(p)\sqrt{-1}}$ be the holonomy of $L$ along the geodesic loop $\gamma_{p,u}$. Since the pull-back of a parallel section is still a parallel section, the holonomy of the connection $\chi_v^*\nabla$ on $\chi_v^*L$ along the geodesic loop based at $\pi_v(0)$ corresponding to $u$ and is also $e^{2\pi \alpha_{u}(p)\sqrt{-1}}$.

    Applying Theorem \ref{thm-cylinder-main} to the quotient \(\C^n/\langle v\rangle\) yields, for each \(v\in\mathcal P\),
    \[
    \sum_{u\in\langle v\rangle\setminus\{1\}} K^{u}_k(0,0)
    = (\frac{k}{2\pi})^n\sum_{u\in\langle u\rangle\setminus\{0\}}
    e^{-k(\frac{|u|}{2})^2}\cos\!\big(2\pi k\alpha_{u}(p)\big).
    \]
    Summing over all primitive elements \(g\in\mathcal P\) gives the formula in Theorem \ref{thm-main}. 
\end{proof}
\begin{proof}[Proof of Theorem \ref{thm-off-diag}]
	By Theorem \ref{thm-k-gamma-bergman}, we have 
	\[\pi^*K_{X,k}(z,w)=\sum_{v\in \Lambda}(A_v^*s_w(z))\otimes \bar{s}_w(w).\]
	We can assume that $w=0$. Then since $|A_v^*s_0(z)|=(\frac{k}{2\pi})^{n/2}e^{-\frac{k}{4}d^2(-v,z)}$. We have \[|\pi^*K_{X,k}(z,0)|_h\leq (\frac{k}{2\pi})^{n}\sum_{v\in \Lambda}e^{-\frac{k}{4}d^2(-v,z)}.\]
	Since $d(-v,z)$ can be identified with the length of the corresponding geodesic segment in $X$ joining $\pi(z)$ and $\pi(0)$, the theorem follows.
\end{proof}
\subsection{Proof of Theorem \ref{thm-second}}
For a fixed line bundle $L$ on $X$ with $\omega\in 2\pi c_1(L)$, let $h$ be a Hermitian metric on $L$ such that $\Theta_h=-i\omega$. Let $F$ denote the underlying $C^\infty$ complex line bundle of $L$. Let $\ucal$ be the space of unitary connections on $(F,h)$ whose curvatures satisfy the condition that the $(0,2)$-part is $0$. Let $\nabla_0\in \ucal$ be the Chern connection of $(L,h)$.

Let $\jcal_F$ be the space of holomorphic structures on $F$. The space of isomorphism classes of holomorphic structures on $F$ can be identified with $\Pic^0(X)+L$. Given $\nabla\in \ucal$, the $(0,1)$-part $\nabla^{0,1}$ of $\nabla$ defines a holomorphic structure on $F$, by requiring that a local smooth section $s$ of $F$ is holomorphic if and only if $\nabla^{0,1}s=0$.
This defines a surjective map $\kappa$ from $\ucal$ to $\jcal_F$.

\begin{lemma}\label{lem-holonomy}
	Let $\{v_1,\dots,v_{2n}\}$ be a basis of $\Lambda$. Let $L'\in \jcal_F$, $p\in X$. Suppose that $\text{Hol}_{L'}(\gamma_{p,v_i})=\text{Hol}_{L'}(\gamma_{p,v_i})$, $1\leq i\leq 2n$. Then $L'$ is isomorphic to $L$.
\end{lemma}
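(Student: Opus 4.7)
The plan is to reinterpret the hypothesis (whose right-hand side should read $\mathrm{Hol}_L(\gamma_{p,v_i})$) as the vanishing of a character $\Lambda\to U(1)$ that classifies the flat line bundle $L'\otimes L^{-1}\in\Pic^0(X)$, and then to exploit that a character on $\Lambda\cong\mathbb{Z}^{2n}$ is determined by its values on any $\mathbb{Z}$-basis.

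First I would fix a convenient gauge on $L'$. Since $c_1(L')=c_1(L)$, the $\partial\bar\partial$-lemma produces a Hermitian metric $h'$ on $L'$, unique up to a positive constant, whose Chern curvature equals $\Theta_h=-i\omega$. Let $\nabla'$ be the corresponding Chern connection; with this normalisation the holonomies $\mathrm{Hol}_{L'}(\gamma)$ are canonically defined, and $A:=\nabla'-\nabla_0$ is a closed $\sqrt{-1}\,\mathbb{R}$-valued $1$-form on $X$. The holonomy ratio
\[
\chi_{L'}(v)\;:=\;\frac{\mathrm{Hol}_{L'}(\gamma_{p,v})}{\mathrm{Hol}_{L}(\gamma_{p,v})}\;=\;\exp\!\Bigl(\int_{\gamma_{p,v}}A\Bigr)
\]
is independent of the base point $p$ by Stokes' theorem applied to the cylinder swept out when translating $\gamma_{p,v}$ to $\gamma_{p',v}$, and additivity of the integral in $v$ promotes it to a homomorphism $\chi_{L'}:\Lambda\to U(1)$.

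Second, I would identify $\chi_{L'}$ with the Appell--Humbert classifying character of $L'\otimes L^{-1}\in\Pic^0(X)$ under the standard isomorphism $\Pic^0(X)\cong\Hom(\Lambda,U(1))$. Concretely, pulling $L'\otimes L^{-1}$ back to the universal cover $\mathbb{C}^n$ using the holomorphic frame $\bm{e}_k$ of Section~\ref{subsec-back-cn}, its automorphy cocycle differs from the constant cocycle $v\mapsto\chi_{L'}(v)$ by a coboundary. This provides an explicit holomorphic isomorphism $L'\otimes L^{-1}\cong L_{\chi_{L'}}$, so $L'\cong L$ holomorphically if and only if $\chi_{L'}\equiv 1$.

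Finally, the hypothesis gives $\chi_{L'}(v_i)=1$ for every element of a $\mathbb{Z}$-basis of $\Lambda\cong\mathbb{Z}^{2n}$, which forces the homomorphism $\chi_{L'}$ to vanish identically, and the lemma follows. The step I expect to be the main obstacle is the second paragraph: one must verify carefully that constant rescalings of $h'$ do not affect the holonomy ratio, that $\chi_{L'}$ is insensitive to the base point $p$ and the choice of holomorphic frame on the universal cover, and that the resulting character really matches the Appell--Humbert data of $L'\otimes L^{-1}$. Once this translation between the connection-theoretic and holomorphic descriptions is pinned down, the rest of the argument is routine.
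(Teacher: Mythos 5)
Your argument is correct, and its core is the same as the paper's: the discrepancy between the two connections is flat data on $X$, a character on $\Lambda\cong H_1(X,\Z)$ is determined by its values on a $\Z$-basis, and triviality on the basis forces global triviality. (You also correctly read the hypothesis as $\mathrm{Hol}_{L'}(\gamma_{p,v_i})=\mathrm{Hol}_{L}(\gamma_{p,v_i})$; the repeated $L'$ in the statement is a typo.) Where you diverge is the final step. The paper stays entirely gauge-theoretic: it sets $\alpha=\frac{1}{2\pi i}(\nabla'-\nabla_0)$, observes that equal holonomies give $\int_{v_i}\alpha\in\Z$, concludes that $[\alpha]$ lies in the image of $H^1(X,\Z)\to H^1(X,\R)$, hence that $\nabla'=\nabla_0+g^{-1}dg$ for a gauge transformation $g:X\to S^1$, which identifies the induced holomorphic structures. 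You instead package the discrepancy as a character $\chi_{L'}:\Lambda\to U(1)$ and appeal to the isomorphism $\Pic^0(X)\cong\Hom(\Lambda,U(1))$ of Appell--Humbert type to conclude $L'\otimes L^{-1}\cong\mathcal{O}_X$. Both routes work; the paper's is shorter because it never needs to match the connection-theoretic holonomy with the holomorphic classification of $\Pic^0(X)$, which is exactly the compatibility check you flag as the main remaining obstacle. On the other hand, your preliminary normalisation --- choosing $h'$ with $\Theta_{h'}=-i\omega$ so that $\nabla'-\nabla_0$ is closed, whence the holonomy ratio is base-point independent and additive in $v$ --- makes explicit something the paper glosses over: its space $\ucal$ only constrains the $(0,2)$-part of the curvature, yet the closedness of $\alpha$ (and the very well-definedness of $\mathrm{Hol}_{L'}$) requires the full curvature of $\nabla'$ to equal that of $\nabla_0$, as is guaranteed in the application by the hypothesis $\Theta_{h'}=\Theta_h$ of Theorem \ref{thm-second}.
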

\begin{proof}
	$L'=\kappa(\nabla')$ for some $\nabla'\in \ucal$. Then $\alpha=\frac{1}{2\pi i}(\nabla'-\nabla_0)$ is a closed real $1$-form on $X$. Since the holonomies of $\nabla'$ and $\nabla_0$ along $\gamma_{p,v_i}$ are equal for all $i$, we have \[\int_{v_i}\alpha\in \Z\] for all $i$. Therefore, the cohomology class $[\alpha]$ is in the image of the map $H^1(X,\Z)\to H^1(X,\R)$. So there exists a smooth complex gauge transformation $g:X\to S^1$ such that $\nabla'=\nabla_0+g^{-1}dg$. Thus, $L'$ is isomorphic to $L$.
\end{proof}

%Two unitary connections $\nabla_1$ and $\nabla_2$ define isomorphic holomorphic structures if and only if there exists a smooth complex gauge transformation $g:X\to S^1$ such that $\bp_{\nabla_2}=g^{-1}\circ \bp_{\nabla_1}\circ g$. This is equivalent to that the difference $\alpha$ between $\nabla_2$ and $\nabla_1$ satisfies that the $(0,1)$-part $\alpha^{0,1}$ is in the image of the map $f\mapsto \bp f/f$ from $C^\infty(X,S^1)$ to the space of $(0,1)$-forms on $X$.

%The Picard group $\Pic^0(X)$ of $X$, consisting of line bundles of zero first Chern class, can be identified with $H^{1,0}(X)/H^1(\Z)$, as follows. Fix a $C^\infty$ Hermitian line bundle $(F,h)$ on $X$ that is trivial. Let $\alpha$ be a closed $(1,0)$-form,

%Conversely, let $F\in \Pic(X) $ be a line bundle. Then there exists a unique Hermitian metric $h$ on $F$ such that the curture is $0$. Let $\nabla_F$ be the Chern connection of $(F,h)$. 

It is easy to see that if $u,v$ are linearly dependent vectors, then $\text{Hol}_L(\gamma_{q,u})=\text{Hol}_L(\gamma_{p,u})$, for $q\in \gamma_{p,v}$. When $u,v\in \Lambda$ are linearly independent, let $\tilde{p}$ be a lift of $p$ in $\C^n$. For each $q\in \gamma_{p,v}$, there is a lift $\tilde{q}$ contained in the line segment \[\tilde{p}+tv\big| t\in[0,1].\]
	Let $\Sigma_{p,q,u}$ be the parallelogram spanned by $u$ and the line segment $[\tilde{p},\tilde{q}]$, oriented by the order $(v,u)$.
	Then, we have \begin{equation}
	\arg\text{Hol}_L(\gamma_{q,u})-\arg\text{Hol}_L(\gamma_{p,u})=\int_{\Sigma_{p,q,u}}\omega, \quad \mod 2\pi
	\end{equation}
	 for all $q\in \gamma_{p,v}$. Since $\int_{\Sigma_{p,q,u}}\omega=-2\pi \frac{d(p,q)}{|v|}\Im H(v,u)$, we have
	 \begin{equation}\label{eq-hol}
	 \arg\text{Hol}_L(\gamma_{q,u})-\arg\text{Hol}_L(\gamma_{p,u})=-2\pi \frac{d(p,q)}{|v|}\Im H(v,u).
	 \end{equation}
	 So we have:
	 \begin{lemma}\label{lem-cos-vu}
		Let $u,v\in \Lambda$ be linearly independent vectors. Then \[\cos(2\pi \alpha_u(q(s)))=\cos(2\pi (\alpha_u q(0)-\frac{q(s)}{|v|}\Im H(v,u))), \]where $s$ is the length parameter. In particular, if $\Im H(v,u)=0$, then $\text{Hol}_L(\gamma_{q,u})$ is constant for $q\in \gamma_{p,v}$.
	 \end{lemma}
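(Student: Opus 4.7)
The plan is to derive this lemma as a direct trigonometric reformulation of equation \eqref{eq-hol}, which is established in the paragraph immediately preceding the lemma statement. Since $\text{Hol}_L(\gamma_{q,u}) = e^{2\pi i \alpha_u(q)}$ by definition, the argument of the holonomy equals $2\pi \alpha_u(q)$ modulo $2\pi$. First I parameterize $\gamma_{p,v}$ by arc length, writing $q(s)$ for the point at distance $s$ from $p$, so that $q(0) = p$ and $d(p, q(s)) = s$. Substituting into \eqref{eq-hol} and dividing by $2\pi$, I obtain
\[
\alpha_u(q(s)) \equiv \alpha_u(q(0)) - \frac{s}{|v|}\,\Im H(v,u) \pmod{1}.
\]
Multiplying by $2\pi$ and applying the $2\pi$-periodic cosine eliminates the modular ambiguity and yields the first displayed identity of the lemma.

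For the \emph{in particular} statement, if $\Im H(v,u) = 0$ then the right-hand side of \eqref{eq-hol} vanishes, so $\arg \text{Hol}_L(\gamma_{q,u}) \equiv \arg \text{Hol}_L(\gamma_{p,u}) \pmod{2\pi}$ for every $q \in \gamma_{p,v}$; since $|\text{Hol}_L|=1$, this forces $\text{Hol}_L(\gamma_{q,u})$ to be constant along the loop $\gamma_{p,v}$. There is no real obstacle here: the substantive geometric content, namely that the variation of $\arg \text{Hol}_L(\gamma_{q,u})$ as $q$ sweeps along $\gamma_{p,v}$ equals $\int_{\Sigma_{p,q,u}}\omega$ (an instance of the Ambrose--Singer / curvature-holonomy relation applied to the parallelogram $\Sigma_{p,q,u}$ generated by $u$ and the segment $[\tilde p,\tilde q]$), has already been packaged into \eqref{eq-hol}. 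The only subtlety worth flagging is that the pre-cosine identity for $\alpha_u$ holds only modulo $\Z$; this is precisely why the lemma is phrased in terms of $\cos(2\pi\alpha_u)$ rather than lifts of $\alpha_u$ themselves.
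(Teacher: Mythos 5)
Your proposal is correct and matches the paper's own treatment: the lemma is stated there as an immediate consequence of \eqref{eq-hol} (derived in the preceding paragraph via the curvature integral over the parallelogram $\Sigma_{p,q,u}$), and your derivation is exactly that reformulation, with the right observation that the cosine absorbs the mod-$\Z$ ambiguity and that $d(p,q(s))=s$ under the arc-length parameterization.
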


\begin{proof}[Proof of Theorem \ref{thm-second}]
Let $\{v_1,\dots,v_{2n}\}$ be a basis of $\Lambda$. By Lemma \ref{lem-holonomy}, we only need to show that $\text{Hol}_{L'}(\gamma_{p,v_i})=\text{Hol}_{L'}(\gamma_{p,v_i})$, $1\leq i\leq 2n$ for some $p\in X$. For a fixed $p\in X$, we only need to show that $\text{Hol}_{L'}(\gamma_{p,v_1})=\text{Hol}_{L'}(\gamma_{p,v_1})$.

Let $U$ be the real hyperplane of $\C^n$ defined by \[\{u\in \C^n\big|\Im H(v_1,u)=0 \}. \]
Then the image of $U$ in $X$ is a closed sub-torus. Then the quotient $X/\pi(U)$ is also a torus. So $X/\pi(U)\backsimeq S^1$. More precisely, let $Q\backsimeq\R$ be the quotient space $\C^n/U$ and let $\tilde{P}:\C^n\to Q$ be the quotient map. Then the image $\tilde{P}(\Lambda)$ is generated by some $u$ that is a linear combination of $\{v_i\big| \big| \Im H(v_1,v_i)\neq 0, 1\leq i\leq 2n\} $ with integer coefficients. Then $X/\pi(U)$ can be identified with $Q/\langle u\rangle$. Let $P:X\to Q/\langle u\rangle$ be the quotient map. Each fiber $P^{-1}(x)$ is a copy of the torus $\pi(U)$. More precisely, let $\tilde{x}$ be a lift of $x$ to $\C^n$, then $P^{-1}(x)$ is the image $\pi(\tilde{x}+U)$. So $P^{-1}(x)$ inherits a Riemannian metric from $\C^n$. Let $d\mu_x$ be the corresponding volume form on $P^{-1}(x)$. Then we define the push-forward map \[P_*:C(X)\to C(Q/\langle u\rangle) \] by \[P_*(f)(x)=\int_{P^{-1}(x)} fd\mu_x, \]
for each continuous function $f$ on $X$. By Lemma \ref{lem-cos-vu}, $\text{Hol}_{L}(\gamma_{q,v_1})$ is constant for  
$q\in P^{-1}(x)$, for each $x$. Thus, considering $cos(2\pi \alpha_{v_1}(q))$ as a function of $q\in X$, we have \[(P_*\cos(2\pi \alpha_{v_1}(q)))(x)=\vol(P^{-1}(x))\cos(2\pi \alpha_{v_1}(q_0)),  \]where $q_0$ is any point in $P^{-1}(x)$. Since $\vol(P^{-1}(x))$ is independent of $x$, we denote it by $\nu$. Let $x_0=P(p)$ and $\phi_p=\alpha_{v_1}(p)$. We can use $t\in [0,1]$ to parametrize $Q/\langle u\rangle$ such that $x_0$ corresponds to $0$ and such that $t$ is proportional to the length parameter. Then we have \[(P_*\cos(2\pi \alpha_{v_1}(q)))(x)=\nu\cos(2\pi (\lambda t+\phi_p)),\] for some integer $\lambda$. Similarly, for any integer $m\neq 0$, we have 
\[(P_*\cos(2\pi \alpha_{mv_1}(q)))(x)=\nu\cos(2\pi m (\lambda t+\phi_p)).\]

If $u\in \Lambda $ is not a multiple of $v_1$, then $\exists u'\in \Lambda\cup U$ such that $\Im H(u',u)\neq 0$. 
By Lemma \ref{lem-cos-vu}, since $\int_{0}^{1}\cos(2\pi mx)dx=0$ for any integer $m$, we then have \[(P_*\cos(2\pi \alpha_{u}(q)))(x)=0, x\in Q/\langle u\rangle. \] 
Therefore, by uniform convergence, we have \[(P_*(\frac{(2\pi)^n}{k^n}\rho_{L,k}-1))(x)=2\nu \sum_{m\geq 1} e^{-\frac{k}{4}|m|^2|v_1|_H^2}\cos(2\pi m (\lambda t+\phi_p)). \] 

For $L'$, we let $\text{Hol}_{L'}(\gamma_{p,v})=e^{2\pi i \alpha'_v(p)}$ and $\phi'_p=\alpha'_{v_1}(p)$. Then we also have \[(P_*(\frac{(2\pi)^n}{k^n}\rho_{L',k}-1))(x)=2\nu \sum_{m\geq 1} e^{-\frac{k}{4}|m|^2|v_1|_H^2}\cos(2\pi m (\lambda t+\phi'_p)). \]
Therefore, if $\rho_{L',k}=\rho_{L,k}$, we get \[\sum_{m\geq 1} e^{-\frac{k}{2}|m|^2|v_1|_H^2}\cos(2\pi m (\lambda t+\phi'_p))= \sum_{m\geq 1} e^{-\frac{k}{2}|m|^2\|v_1\|^2}\cos(2\pi m (\lambda t+\phi_p)), \] for $t\in [0,1]$. So we get $\phi_p'=\phi_p, \mod 1$, namely $\text{Hol}_{L'}(\gamma_{p,v_1})=\text{Hol}_{L'}(\gamma_{p,v_1})$. So we have proved the theorem.

\end{proof}
\subsection{Proof of Theorem \ref{thm-max-min}}
Let $\vcal=(v_1,\dots,v_{2n})$ be a basis of $\Lambda$. The space of functions $\Hom(\vcal,S^1)$ is naturally a real torus of dimension $2n$.  Let $L$ be a line bundle on $X$ as in the setting of Theorem \ref{thm-main}. We define the following map: \[\Phi_\vcal:X\to \Hom(\vcal,S^1),\quad p\mapsto (\operatorname{Hol}_\nabla(\gamma_{p,v_1}),\cdots,\operatorname{Hol}_\nabla(\gamma_{p,v_{2n}}))\]
\begin{proposition}\label{prop-surj}
	 $\Phi_\vcal$ is surjective.
\end{proposition}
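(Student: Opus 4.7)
The plan is to show that $\Phi_\vcal$ is a local diffeomorphism. Since $X$ and $\Hom(\vcal,S^1)\cong (S^1)^{2n}$ are both smooth real manifolds of the same dimension $2n$, with $X$ compact and the target connected, a local diffeomorphism $X\to\Hom(\vcal,S^1)$ is automatically a (finite) covering map, hence surjective. All content therefore reduces to computing $d\Phi_\vcal$ and checking that it has maximal rank at every point.

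For the differential computation I intend to use formula \eqref{eq-hol}. Although that formula is stated for a point $q$ lying on a lattice-direction geodesic $\gamma_{p,v}$, the argument behind it (integrating $\omega$ over a parallelogram spanned by two translation vectors) works verbatim for arbitrary translations; equivalently, since the right-hand side of \eqref{eq-hol} is $\R$-linear in the displacement vector and $\Lambda\otimes_\Z\R=\C^n$, the formula extends by linearity. Writing $\alpha_{v_j}(p)\in\R/\Z$ for a continuous local lift of $\tfrac{1}{2\pi}\arg\operatorname{Hol}_\nabla(\gamma_{p,v_j})$, this gives, for every $w\in T_pX\cong\C^n$,
\begin{equation*}
\partial_w\alpha_{v_j}(p)\;=\;-\Im H(w,v_j).
\end{equation*}
Identifying $T_{\Phi_\vcal(p)}\Hom(\vcal,S^1)\cong\R^{2n}$ via the angular coordinates $2\pi\alpha$, the differential $d\Phi_\vcal$ at any $p$ is the $\R$-linear map $w\mapsto\bigl(-\Im H(w,v_j)\bigr)_{j=1}^{2n}$.

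To see this map is an isomorphism, I would argue as follows. If $w\in\C^n$ lies in its kernel, then $\Im H(w,v_j)=0$ for every $j$. Since $(v_1,\dots,v_{2n})$ is a basis of $\Lambda$ and hence an $\R$-basis of $\C^n$, $\R$-bilinearity of $\Im H$ forces $\Im H(w,\cdot)\equiv 0$ on $\C^n$. Substituting $v\mapsto iv$ and using $H(w,iv)=-iH(w,v)$ then yields $\Re H(w,\cdot)\equiv 0$ as well, so $H(w,\cdot)\equiv 0$, and positive definiteness of $H$ forces $w=0$. Therefore $d\Phi_\vcal$ is an isomorphism at every point, $\Phi_\vcal$ is a local diffeomorphism, and the topological remark of the first paragraph completes the proof.

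The main ``obstacle'' is really only pinning down the derivative formula cleanly from \eqref{eq-hol}; once that is in place, non-degeneracy of $\Im H$ and the compact-to-connected covering argument deliver surjectivity immediately.
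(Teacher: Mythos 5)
Your proof is correct and rests on the same computation as the paper's: formula \eqref{eq-hol} shows that, in terms of a lift $z$ of $p$, the holonomies vary as $e^{2\pi i E(v_j,z)}$ with $E=\Im H$, and the non-degeneracy of $E$ (equivalently, positive definiteness of $H$) does all the work. The paper concludes by observing that the lifted map $z\mapsto 2\pi(E(v_1,z),\dots,E(v_{2n},z))$ is an injective, hence surjective, linear map between $2n$-dimensional real vector spaces, whereas you package the same non-degeneracy as ``local diffeomorphism from a compact manifold to a connected one, hence a surjective covering''; this is only a cosmetic difference.
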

\begin{proof}
	
Write \(E:=\Im H\). 
Fix a basepoint \(0\in X\).
If \(p=[z]\in X\) with \(z\in\C^n\) a lift, then by \eqref{eq-hol}
\[
\operatorname{Hol}_\nabla(\gamma_{p,v_i})
=\operatorname{Hol}_\nabla(\gamma_{0,v_i})e^{2\pi i\,E(v_i,z)}.
\]
Let $\Pi:\R^{2n}\to \Hom(\vcal,S^1)$ be a covering map chosen so that \[\Pi(0)=(\operatorname{Hol}_\nabla(\gamma_{0,v_1}),\cdots,\operatorname{Hol}_\nabla(\gamma_{0,v_{2n}})).\]
Then the map $\Phi_\vcal\circ \pi:\C^n\to \Hom(\vcal,S^1)$ is lifted to a map $\tilde{\Phi}_\vcal:\C^n\to \R^{2n}$ given by
\[z\mapsto 2\pi( E(v_1,z),\cdots,E(v_{2n},z)),\]
which is clearly linear. To show that $\Phi_\vcal$ is surjective, it suffices to show that $\tilde{\Phi}_\vcal$ is surjective.
Since $E$ is non-degenerate, $\ker \tilde{\Phi}_\vcal=0$. So $\tilde{\Phi}_\vcal$ is surjective.

\end{proof}
\begin{proof}[Proof of Theorem \ref{thm-max-min}]
	Part (a) follows directly from Theorem \ref{thm-main}. For part (b) and (c), we have that $\exists C_1>0$, $k_0>0$  such that for $k\geq k_0$, \[\sum_{v\in \Lambda, |v|_H>l_1}e^{-\frac{k}{4}|v|_H^2}\leq C_1 e^{-\frac{k}{4}l_2^2}.\] 
	Let $p$ be a point satisfying the condition that $\text{Hol}_{L}(\gamma_{p,v})=1$ for all $v\in S_1$. Suppose that $k\geq k_0$ and $\rho_k(q)>\rho_k(p)$, then we have \[\sum_{v\in S_1}e^{-\frac{k}{4}l_1^2}(1-\cos(2\pi k\alpha_v(q)))\leq 2C_1 e^{-\frac{k}{4}l_2^2}, \]namely \[\sum_{v\in S_1}(1-\cos(2\pi k\alpha_v(q)))\leq 2C_1 e^{-\frac{k}{4}(l_2^2-l_1^2)}.\] 
	If we require that $\alpha_v(q)\in (-\pi,\pi]$ for all $v$, then this implies that $|\alpha_v(q)|$ is small for $v\in S_1$. 
Let $\{v_1, v_2,\cdots, v_m\}$ be a maximally linearly independent subset of $S_1$.
	From the proof of Proposition \ref{prop-surj}, it is easy to see that $\exists C_2>0$, independent of $k$, such that when $\epsilon$ is small enough, for any $q'\in X$, if $1-\cos(2\pi k\alpha_{v_j}(q'))<\epsilon$, $1\leq j\leq m$, then $\exists p'\in X$ such that $d(p',q')<C_2\epsilon$ and $e^{2\pi k\alpha_{v_j}(p')\sqrt{-1}}=1$, $1\leq j\leq m$. Let $q'=q$, we get $p'$ such that $d(p',q)<2C_2C_1 e^{-\frac{k}{4}(l_2^2-l_1^2)}$ and $e^{2\pi k\alpha_{v_j}(p')\sqrt{-1}}=1$, $1\leq j\leq m$.

	Under the assumption of part (b), this implies that $e^{2\pi k\alpha_{v}(p')\sqrt{-1}}=1$ for all $v\in S_1$. So we have proved part (b).

	The assumption that $\#S_1=2m$ in part (c) just says that $S_1=\{\pm v_1,\cdots,\pm v_m\}$. So the part for maximum follows directly. The part for minimum is similar.
\end{proof}

\bibliographystyle{plain}

% Add missing bibliography entries to avoid citation errors

\bibliography{references}

\end{document}